\newtheorem{theorem}{Theorem}
\newtheorem{lemma}[theorem]{Lemma}
\newtheorem{corollary}[theorem]{Corollary}
\numberwithin{theorem}{section}
\theoremstyle{remark}
\newtheorem*{remark}{Remark}
\DeclareMathOperator{\sgn}{sgn}
\begin{document}
\title{An alternative to Riemann-Siegel type formulas}
\author{Ghaith A.\ Hiary}
\thanks{Preparation of this material is partially supported by
the National Science Foundation under agreement No.\ 
 DMS-0932078 (while at MSRI) and DMS-1406190, 
and by the Leverhulme Trust (while at the University of Bristol).}
\address{Department of Mathematics, The Ohio State University, 231 West 18th Ave,
Columbus, OH 43210}
\keywords{Riemann zeta function, Dirichlet $L$-functions, algorithms}
\subjclass[2010]{Primary 11M06, 11Y16; Secondary 68Q25.}
\begin{abstract}
Simple unsmoothed formulas to compute the Riemann zeta function, 
and Dirichlet $L$-functions to a power-full
modulus, are derived by elementary means (Taylor expansions and the geometric series).
The formulas enable square-root of the analytic conductor complexity,
up to logarithmic loss, 
and have an explicit remainder term that is easy to control.
The formula for zeta yields
a convexity bound of the same strength as that from the Riemann-Siegel formula, 
up to a constant factor. 
Practical parameter choices are discussed.
\end{abstract}
\maketitle

\section{Introduction}
The Riemann zeta function is defined for $s=\sigma+it$ by 
$\zeta(s) = \sum_{n=1}^{\infty} n^{-s}$, $\sigma>1$.
It can be analytically continued everywhere 
except for a simple pole at $s=1$.
The zeta function satisfies the functional equation 
$\zeta(s) = \chi(s) \zeta(1-s)$ where $\chi(s) := \pi^{s-1/2}
\Gamma((1-s)/2)/\Gamma(s/2)$.
One is usually interested in numerically 
evaluating $\zeta(\sigma+it)$ on the critical line $\sigma = 1/2$ 
(e.g.\ to verify the Riemann hypothesis). 
However, one cannot use the Dirichlet series $\sum_{n=1}^{\infty} n^{-s}$
to numerically evaluate zeta
when $\sigma<1$ because the series diverges.
Rather, one can use partial summation and integration by parts
to analytically continue the series to $\sigma>0$, obtaining
\begin{equation} \label{zeta trunc}
\zeta(s) = \sum_{1\le n<M} \frac{1}{n^s} +\frac{M^{-s}}{2}+ \frac{M^{1-s}}{s-1}
+\mathcal{R}_M(s), \qquad |\mathcal{R}_M(s)| \le \frac{\mathfrak{q}(s)}{\sigma
M^{\sigma}},
\end{equation}
where $\mathfrak{q}(s):=|s|+3$ is the analytic conductor of zeta; 
see \cite{iwaniec-kowalski}.
The analytic conductor terminology  
was introduced by Iwaniec and Sarnak; see \cite{iwaniec-sarnak} for example.
This terminology will be useful when we generalize our formulas to Dirichlet
$L$-functions, and it ensures that $\log\mathfrak{q}(s)>0$.
We remark, though, that the precise definition of the analytic conductor does not affect
the asymptotic content of the results,
since $\mathfrak{q}(s)$ needs only be of a comparable size to $|s|$.

Formula \eqref{zeta trunc} can be viewed as consisting of a main sum
$\sum_{n<M} n^{-s}$, an extra term $M^{-s}/2+M^{1-s}/(s-1)$, and 
a remainder $\mathcal{R}_M(s)$. 
The main sum accounts for the bulk of the computational effort, 
the extra term can be computed easily,
and the remainder can be controlled by choosing $M$ accordingly.
For example, one can ensure that $|\mathcal{R}_M(s)|<\epsilon$ on taking
$M > (\mathfrak{q}(s)/(\sigma \epsilon))^{1/\sigma}$.
So when $\sigma = 1/2$, the main sum consists of 
$\gg \mathfrak{q}(s)^2$ terms, even if $\epsilon = 1$ say.
Using a more careful analysis, however, 
one can show that $\mathcal{R}_M(s) \ll M^{-\sigma}$ 
if $M \gg \mathfrak{q}(s)$. 
Alternatively, one can use the Euler-Maclaurin summation (see \textsection{\ref{prev methods}})
which allows for far more accuracy. In either case, though, 
the resulting main sum is of length $\gtrsim \mathfrak{q}(s)$.
So these formulas are
rather impractical for numerical computations on single processor
when $t \gtrsim 10^{10}$, say, especially if high precision is sought. 
This is unfortunate since they are simple 
to derive and analyze, and have explicit error bounds.
So, instead, one typically uses the Riemann-Siegel asymptotic formula 
which has a much shorter main
sum of length $\lfloor \sqrt{t/(2\pi)}\rfloor$ (see \textsection{\ref{prev
methods}}). 
The Riemann-Siegel formula was discovered around $1932$ in
Riemann's unpublished papers by C.L. Siegel.
Some of its history is narrated in \cite[Chapter 7]{edwards-zeta-book}. 
In lieu of the Riemann-Siegel formula, one can use 
the efficient smoothed formulas in \cite{rubinstein-computational-methods}.

We propose a new method for computing zeta based on slowly converging
Dirichlet series such as \eqref{zeta trunc}. Then we generalize our method to
Dirichlet $L$-functions to a power-full modulus.
Interestingly, our results can be derived without knowing about the functional equation of the
associated $L$-function, nor using analysis of similar strength,
such as the Poisson summation.  
To state the results, we introduce some notation. Let
\begin{equation}
\begin{split}
f_s(z) := \frac{e^{sz}}{(1+z)^s},\,\,\,  f_s(0)=1,\quad
g_K(z) := \sum_{k=0}^{K-1} e^{kz} =\frac{e^{Kz}-1}{e^z-1},\,\, z\not\in 2\pi
i \mathbb{Z},
\end{split}
\end{equation}
where  
$f_s^{(j)}(z)$ and $g_K^{(j)}(z)$ denote the $j$-th derivative
in $z$. 
We choose integers $u_0\ge 1$, $v_0\ge u_0$, and $M\ge v_0$,
and construct sequences $K_r= \lceil v_r/u_0\rceil$ 
and $v_{r+1}= v_r + K_r$ for $0\le r< R$, where 
$R:=R(v_0,u_0,M)$ is the largest integer such that $v_R < M$.
We define $K_R :=\min\{\lceil v_R/u_0\rceil,M-v_R\}$,
so that $v_{R+1} = M$. Then we divide the main sum in \eqref{zeta trunc} 
into an initial sum of length $v_0$, followed by
 $R+1$ consecutive blocks where
the $r$-th block starts at $v_r$ and has length $K_r$.
The sequences $K_r$ and $v_r$ are so defined in order to implement
 a more efficient version of dyadic subdivision of the main sum.
 There will be substantial flexibility in choosing them
(need only $K_r-1\le v_r/u_0$, $u_0\ge \sqrt{\mathfrak{q}(s)}$), 
but we do not exploit this here. 
We plan to approximate the $r$-th block $\sum_{v_r\le n<v_r+K_r} n^{-s}$ 
by $v_r^{-s}B_r(s,m)$ where
\begin{equation}
B_r(s,m):=\sum_{j=0}^m \frac{f_s^{(j)}(0)}{j!},\frac{g_{K_r}^{(j)}(-s/v_r)}{v_r^j}, 
\end{equation}
which is  a linear combination of a geometric sum and its derivatives. 
Also, we let
\begin{equation}
\mathcal{B}_M(s,u_0,v_0):=\sum_{r=0}^R v_r^{-\sigma}
\min\{g_{K_r}(-\sigma/v_r),|\csc(t/(2v_r))|\},
\end{equation}
\begin{equation} \label{eps def}
\epsilon_m(s,u):=\left\{\begin{array}{ll}
\displaystyle 
\frac{3.5\, e^{0.78(m+1)}}{(m+1)^{(m+1)/2}}\frac{|s|^{(m+1)/2}}{u^{m+1}},\,\, &m\le |s|/4,\\
\\
\displaystyle \frac{2^m e^{0.194|s|}}{u^m},\,\, &m> |s|/4.
\end{array}\right.
\end{equation}
We prove the following theorem in \textsection{\ref{proofs}}. 
\begin{theorem}\label{zeta alg}
Given $s=\sigma+it$ with $\sigma>0$,  
let $u_0$ and $v_0$ be any integers satisfying
 $v_0\ge u_0\ge 2\max\{6,\sqrt{\mathfrak{q}(s)},\sigma\}$.
Then for any integers $M\ge v_0$ and  $m\ge 0$ we have 
\begin{displaymath}
\zeta(s) = \sum_{n=1}^{v_0-1} \frac{1}{n^s} 
+ \sum_{r=0}^R \frac{B_r(s,m)}{v_r^s}
+\frac{M^{-s}}{2}+ \frac{M^{1-s}}{s-1} 
+ \mathcal{T}_{M,m}(s,u_0,v_0) + \mathcal{R}_M(s),
\end{displaymath}
where $\displaystyle |\mathcal{T}_{M,m}(s,u_0,v_0)|  
\le \epsilon_m(s,u_0) \mathcal{B}_M(s,u_0,v_0)$. 
We have $R < 2u_0 \log(M/v_0)+1$.
\end{theorem}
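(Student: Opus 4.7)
The plan is to take the elementary truncation identity \eqref{zeta trunc} as the starting point and approximate its main sum $\sum_{n=1}^{M-1} n^{-s}$ block-by-block. After peeling off the prefix $\sum_{n=1}^{v_0-1} n^{-s}$, I would partition the remainder into the $R+1$ consecutive blocks of length $K_r$ starting at $v_r$, which by the definitions of the sequences satisfies $v_{R+1} = M$. Factoring $v_r^{-s}$ out of the $r$-th block and using $(1+z)^{-s} = e^{-sz} f_s(z)$ rewrites
\begin{equation*}
\sum_{v_r \le n < v_{r+1}} n^{-s} = v_r^{-s} \sum_{k=0}^{K_r-1} e^{-sk/v_r} f_s(k/v_r).
\end{equation*}
Replacing $f_s(k/v_r)$ by its degree-$m$ Taylor polynomial about $0$ and exchanging the orders of summation, the inner sum $\sum_{k=0}^{K_r - 1} k^j e^{-sk/v_r}$ is identified with $g_{K_r}^{(j)}(-s/v_r)$ by differentiating $g_K(z) = \sum_k e^{kz}$ in $z$ term by term; this exhibits $v_r^{-s} B_r(s,m)$ as the main-term approximation of the block and leaves the Taylor error $\rho_m(s, k/v_r)$ to estimate.

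The core technical step is to control $\rho_m(s, z) := f_s(z) - \sum_{j=0}^m f_s^{(j)}(0) z^j/j!$ uniformly for $z \in [0, 1/u_0]$. I would write $f_s(w) = \exp(s(w - \log(1+w)))$, which is analytic on the unit disk, and apply Cauchy's estimate on a circle $|w| = r < 1$ together with the pointwise bound $|f_s(w)| \le \exp(|s|\,(-\log(1-r) - r))$. The two branches of $\epsilon_m$ correspond to two choices of $r$. For $m > |s|/4$, taking $r = 1/2$ gives the exponent constant $\log 2 - 1/2 \le 0.194$ and yields the $2^m e^{0.194|s|}/u^m$ form. For $m \le |s|/4$, optimizing $r \sim \sqrt{(m+1)/|s|}$ balances $\exp(|s| r^2/2)$ against the $|z/r|^{m+1}$ factor from the contour, producing the $(m+1)^{-(m+1)/2} |s|^{(m+1)/2}$ shape; the numerical constants $3.5$ and $e^{0.78}$ absorb the residual factor $r/(r-|z|)$ together with the hypothesis $u_0 \ge 2\sqrt{\mathfrak{q}(s)}$.

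Once the pointwise bound on $\rho_m$ is in place, each block error is estimated by
\begin{equation*}
\Bigl|v_r^{-s} \sum_{k=0}^{K_r - 1} e^{-sk/v_r} \rho_m(s, k/v_r) \Bigr| \le \epsilon_m(s, u_0)\, v_r^{-\sigma} \min\bigl\{g_{K_r}(-\sigma/v_r),\, |\csc(t/(2v_r))|\bigr\},
\end{equation*}
where the two alternatives inside the minimum arise from the triangle inequality on the decay factor $e^{-\sigma k/v_r}$ and from the closed-form cancellation $|\sum_k e^{-itk/v_r}| = |\sin(tK_r/(2v_r))/\sin(t/(2v_r))| \le |\csc(t/(2v_r))|$, respectively. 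Summing over $0 \le r \le R$ yields the stated bound $|\mathcal{T}_{M,m}(s,u_0,v_0)| \le \epsilon_m(s,u_0) \mathcal{B}_M(s, u_0, v_0)$. Finally, $v_{r+1} \ge v_r(1 + 1/u_0)$ iterates to $v_R \ge v_0(1+1/u_0)^R$, and combining $v_R < M$ with $\log(1+1/u_0) \ge 1/(2u_0)$ gives $R < 2u_0 \log(M/v_0) + 1$.

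The main obstacle is establishing the $\epsilon_m$ bound with the explicit numerical constants, uniformly across both the $m \le |s|/4$ and $m > |s|/4$ regimes; the rest is a straightforward rearrangement of finite sums together with two applications of the Cauchy estimate.
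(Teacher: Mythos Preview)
Your overall architecture matches the paper: start from \eqref{zeta trunc}, peel off the prefix, partition into blocks, factor out $v_r^{-s}$, Taylor-expand $f_s$, and identify the main term as $B_r(s,m)$. The bound on $R$ is exactly the paper's Lemma~\ref{R bound}.

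There is, however, a genuine gap in your derivation of the $|\csc(t/(2v_r))|$ alternative inside $\mathcal{B}_M$. You claim the block error
\[
\Bigl|\sum_{k=0}^{K_r-1} e^{-sk/v_r}\,\rho_m(s,k/v_r)\Bigr|
\]
is bounded by $\epsilon_m(s,u_0)\,|\csc(t/(2v_r))|$ via the closed form for $\sum_k e^{-itk/v_r}$. But $\rho_m(s,k/v_r)$ is complex-valued and varies with $k$, so a pointwise bound $|\rho_m|\le \epsilon_m$ does not let you pull $\epsilon_m$ outside and then invoke cancellation in $\sum_k e^{-itk/v_r}$; Abel summation would require control on the total variation of $k\mapsto e^{-\sigma k/v_r}\rho_m(s,k/v_r)$, which you do not have. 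The paper (Lemma~\ref{h sum}) avoids this by \emph{first} interchanging the $j$- and $k$-sums, obtaining $\sum_{j>m} c_j(s)\sum_k (k/v)^j e^{-sk/v}$, and only then applying partial summation to each inner sum. The point is that for fixed $j\ge 1$ the real weight $(k/v)^j e^{-\sigma k/v}$ is monotone increasing on $0\le k<K$ (since $k/v\le 1/u<1/\sigma$), so Abel summation against $e^{-itk/v}$ legitimately produces the $|\csc(t/(2v))|$ factor together with $((K-1)/v)^j$.

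This also explains why the paper bounds the individual coefficients $|c_j(s)|$ (with a $j$-dependent Cauchy radius $c=\sqrt{j/|s|}$ for $j\le |s|/4$, and $c=1/2$ otherwise) rather than bounding $\rho_m(s,z)$ as a single remainder: after the interchange one needs $\sum_{j>m}|c_j(s)|((K-1)/v)^j\le \epsilon_m(s,u)$, not merely a pointwise bound on $\rho_m$. Your single-radius Cauchy estimate for $\rho_m$ is fine for the $g_{K_r}(-\sigma/v_r)$ alternative but does not feed into the $\csc$ argument. The fix is straightforward once seen: bound each $|c_j(s)|$ separately, interchange, and sum the tail.
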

We could have used the main sum from the Euler-Maclaurin formula, instead
of the main sum in \eqref{zeta trunc},
to derive Theorem~\ref{zeta alg}. This permits one to choose  $M$ smaller.
Indeed, replacing $\mathcal{R}_M(s)$ by the Euler-Maclaurin correction terms, 
one can restrict $M\ll \mathfrak{q}(s)$ while retaining high accuracy. 
In this case, Theorem~\ref{zeta alg}, applied with $m=0$, leads to a simple
proof of the bound $\zeta(1/2+it)\ll \mathfrak{q}(1/2+it)^{1/4}$; see
corollary~\ref{zeta bound} in \textsection{\ref{convexity-bounds}}. 
The truncation error $\mathcal{T}_{M,m}$ in Theorem~\ref{zeta alg} 
is  bounded by $\epsilon_m\mathcal{B}_M$, where, by lemma~\ref{Kr bound}, we have 
$\mathcal{B}_M(s,u_0,v_0)\le v_0^{-\sigma} + 
(M^{1-\sigma}-v_0^{1-\sigma})(1-\sigma)^{-1}$ if $\sigma \ne 1$,
and $\mathcal{B}_M(s,u_0,v_0) \le v_0^{-\sigma}+\log(M/v_0)$ if $\sigma = 1$.
This estimate is quite generous, however. It can be improved 
by computing $\mathcal{B}_M(s,u_0,v_0)$ directly, 
which should yield a bound like $u_0/(\sigma v_0^{\sigma})$.
The said computation can be done in about
$R$ steps, and so it is subsumed by the computational
effort for the main sum. 
In either case, the remainder term is clearly
easy to control when $u_0\ge\sqrt{\mathfrak{q}(s)}$,
 due to the rapid decay of $\epsilon_m(s,u_0)$ with $m$
(decays like $1/\lfloor (m+1)/2\rfloor!$).

The main sum in Theorem~\ref{zeta alg} has $v_0 + (m+1)(R+1)$ terms,
where each term is, basically, a geometric sum.
To ensure that $|\mathcal{T}_{M,m}(s)| + |\mathcal{R}_M(s)| <\epsilon$ 
for $\sigma=1/2$, 
it suffices to take $M \ll (\mathfrak{q}(s)/\epsilon)^2$ and 
$m \ll \log (\mathfrak{q}(s)/\epsilon)$. Since $R\le
2u_0\log(M/v_0)+1$, this is of length 
$\ll v_0+u_0\log^2 (\mathfrak{q}(s)/\epsilon)$ terms.
Choosing $u_0=v_0 = 2\lceil \sqrt{\mathfrak{q}(s)}\rceil$, which is
a typical choice, the main sum thus consists of
$\ll \sqrt{\mathfrak{q}(s)} \log^2 (\mathfrak{q}(s)/\epsilon)$ terms.
We show how to compute these terms (geometric sums) 
efficiently in \textsection{\ref{terms comp}},
using $\ll \log(\mathfrak{q}(s)/\epsilon)$ precision.
So, put together, the complexity of the formula in Theorem~\ref{zeta alg} depends 
only logarithmically on $M$ and the error tolerance $\epsilon$. 
The formula enables square-root of the analytic conductor complexity, up to
logarithmic loss, without using the
functional equation, or the approximate functional equation.
Also, the usual factor $\chi(s)$ does not appear, and 
the conditions on $v_0$ and $u_0$ imply that $v_0u_0 \gg \mathfrak{q}(s)$.
Nevertheless, the idea behind the theorem 
is fairly simple. Writing $n^{-s}=e^{-s\log n}$, we have $\sum_{v\le n<v+K}
n^{-s}= v^{-s}\sum_{0\le k<K} e^{-s\log(1+k/v)}$. So if $K/v \ll
1/\sqrt{\mathfrak{q}(s)}$, as we will have, then 
$s\log(1+k/v) = sk/v + O(1)$. In particular, using Taylor expansions,  
we can approximate $\sum_{0\le k<K} e^{-s\log(1+k/v)}$ 
by a linear combination of the  
geometric sum $g_K(-s/v)$ and several of its
derivatives. These geometric sums are easy to compute, which is
the reason for the savings. 

One can shorten the length of the main sum in Theorem~\ref{zeta alg}
to be roughly $\mathfrak{q}(s)^{1/3}$.
But then instead of obtaining linear
exponential sums, one obtains quadratic exponential sums.
The length can be further shortened, leading to cubic and higher
degree exponential sums.
In view of this, Theorem~\ref{zeta alg} belongs to
the family of methods for computing zeta that were derived in
\cite{hiary-fast-methods}.
And like these methods (see \cite{hiary-char-sums}), 
Theorem~\ref{zeta alg} can be generalized to Dirichlet 
$L$-functions $L(s,\chi)$, $\chi\bmod{q}$, when $q$ is power-full. 
To this end, define the analytic conductor
for $L(s,\chi)$ by $\mathfrak{q}(s,\chi) := q (|s|+3)$.
If $\chi\bmod{q}$ is non-principal, then we have 
the trivial bound $|\sum_n \chi(n)|< q$. 
Combined with partial summation we obtain, for $\sigma>0$,
that\footnote{
To estimate $\mathcal{R}_M(s,\chi)$, we used the following partial summation formula (see
\cite{rubinstein-computational-methods}): Let $f:\mathbb{Z}^+\to\mathbb{C}$
and $g:\mathbb{R}\to\mathbb{C}$ such that $g'$ exists on $[1,x]$. Then
for $y\in [1,x]$ we have
\begin{equation*}
\sum_{y< n\le x} f(n)g(n) = 
\left(\sum_{y< n \le x} f(n)\right)g(x) +\left(\sum_{1\le n \le y}
f(n)\right)(g(x)- g(y)) - 
\int_y^x \left(\sum_{1\le n \le \tau} f(n)\right) g'(\tau) \,d\tau.
\end{equation*}}
\begin{equation}\label{dirichlet trunc}
L(s,\chi) = \sum_{1\le n<M} \frac{\chi(n)}{n^s} + \mathcal{R}_M(s,\chi),
\qquad |\mathcal{R}_M(s,\chi)| \le 
\frac{2\mathfrak{q}(s,\chi)}{\sigma M^{\sigma}}.
\end{equation}
We will only consider the case $q=p^a$ for $p$ prime. 
As in Theorem~\ref{zeta alg}, we divide the main sum in \eqref{dirichlet trunc}
into an initial sum of length $v_0$, followed by
$R+1$ consecutive blocks, where the $r$-th block starts at $v_r$ and has length
$K_r$. Let $g_K(z,\chi,v) := \sum_{0\le k<K} \chi(v+k) e^{kz}$.
Then, in analogy with zeta, we 
 approximate the $r$-th block $\sum_{v_r\le n<v_r+K_r} \chi(n) n^{-s}$ 
by $v_r^{-s}B_r(s,\chi,m)$ where
\begin{equation}
B_r(s,\chi,m):=\sum_{j=0}^m \frac{f_s^{(j)}(0)}{j!}
\frac{g_{K_r}^{(j)}(-s/v_r,\chi,v_r)}{v_r^j}, 
\end{equation}
and $g_K^{(j)}(z,\chi,v)$ denotes the $j$-th derivative in $z$. 
The analogue of $\mathcal{B}_M$ from Theorem~\ref{zeta alg} 
is going to be more complicated to define. 
To this end, let $b:=\lceil a/2\rceil$ and,
for $0\le d<p^b$, let
 $H_{r,d}:= \lceil (K_r-d)/p^b\rceil$ and
$w_{r,d}:=2\pi \overline{v_r+d}L/p^{a-b}$ 
where $L$ is as in lemma~\ref{gKr lemma} 
and $(\overline{v_r+d})(v_r+d)\equiv 1\bmod{p^a}$ if $\gcd(v_r+d,p)=1$.
Then let
\begin{equation}
\begin{split}
\mathcal{B}_M(s,\chi,u_0,v_0):=\sum_{r=0}^R 
\sum_{d=0}^{p^b-1}
\delta_{\gcd(v_r+d,p)=1}
&\min\{e^{-\sigma d/v_r}g_{H_{r,d}}(-p^b\sigma/v_r),\\
&|\csc(w_{r,d}/2-p^bt/(2v_r))|\}
v_r^{-\sigma}.
\end{split}
\end{equation}
In \textsection{\ref{proofs}}, we prove the following.
\begin{theorem}\label{dirichlet alg}
Given $s=\sigma+it$ with $\sigma>0$,
a non-principal Dirichlet character $\chi\bmod{p^a}$ with
$p$ a prime, let $b=\lceil a/2\rceil$, and let 
$u_0$ and $v_0$ be any integers satisfying
$v_0\ge u_0\ge 2\max\{6,\sqrt{\mathfrak{q}(s)},\sigma\}$.
Then for any integers $M\ge v_0$ and  $m\ge 0$ we have 
\begin{displaymath}\label{dirichlet formula}
\begin{split}
L(s,\chi) =& \sum_{n=1}^{v_0-1} \frac{\chi(n)}{n^s} 
+ \sum_{r=0}^{R} \frac{B_r(s,\chi,m)}{v_r^s}
+ \mathcal{T}_{M,m}(s,\chi) + \mathcal{R}_M(s,\chi), 
\end{split}
\end{displaymath}
where $\displaystyle|\mathcal{T}_{M,m}(s,\chi)|  
\le \epsilon_m(s,u_0) \mathcal{B}_M(s,\chi,u_0,v_0)$. 
We have $R < 2u_0 \log(M/v_0)+1$. 
\end{theorem}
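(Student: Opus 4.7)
The proof parallels that of Theorem~\ref{zeta alg}, the essential new ingredient being the $p$-adic structure of $\chi\bmod p^a$ with $b=\lceil a/2\rceil$. I start from the truncated series \eqref{dirichlet trunc} and subdivide $\sum_{1\le n<M}\chi(n)n^{-s}$ into an initial sum of length $v_0$ followed by $R+1$ blocks of length $K_r$ starting at $v_r$. On the $r$-th block I factor $(v_r+k)^{-s}=v_r^{-s}e^{-sk/v_r}f_s(k/v_r)$. Since $k\le K_r-1\le v_r/u_0$ and $u_0\ge 12$, we have $k/v_r\le 1/12$, so the Taylor series $f_s(z)=\sum_{j\ge 0}c_j z^j$ with $c_j:=f_s^{(j)}(0)/j!$ converges absolutely at $z=k/v_r$. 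Substituting and exchanging the finite $k$-sum with the infinite $j$-sum gives
\[
\sum_{k=0}^{K_r-1}\chi(v_r+k)(v_r+k)^{-s}=v_r^{-s}\sum_{j=0}^{\infty}\frac{c_j}{v_r^j}\,g_{K_r}^{(j)}(-s/v_r,\chi,v_r).
\]
Truncating at $j=m$ produces the main term $v_r^{-s}B_r(s,\chi,m)$; the tail $j\ge m+1$ is the contribution of this block to $\mathcal{T}_{M,m}(s,\chi)$.

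To bound the tail in a $j$-uniform manner while retaining the character's oscillation, I split $k=d+p^b\ell$ with $0\le d<p^b$ and $0\le \ell<H_{r,d}$. When $\gcd(v_r+d,p)>1$ the contribution vanishes; otherwise lemma~\ref{gKr lemma} gives the factorization $\chi(v_r+d+p^b\ell)=\chi(v_r+d)\,e^{iw_{r,d}\ell}$ with $w_{r,d}=2\pi\,\overline{v_r+d}\,L/p^{a-b}$. Using $(d+p^b\ell)^j\le K_r^j\le (v_r/u_0)^j$ together with the triangle inequality yields
\[
|g_{K_r}^{(j)}(-s/v_r,\chi,v_r)|\le \left(\frac{v_r}{u_0}\right)^{\!j}\sum_{d=0}^{p^b-1}\delta_{\gcd(v_r+d,p)=1}\,e^{-\sigma d/v_r}\left|\sum_{\ell=0}^{H_{r,d}-1}e^{(iw_{r,d}-p^b s/v_r)\ell}\right|.
\]
The inner $\ell$-sum is a pure geometric progression, which I bound by the minimum of its trivial magnitude $g_{H_{r,d}}(-p^b\sigma/v_r)$ and its closed-form estimate $|\csc(w_{r,d}/2-p^b t/(2v_r))|$.

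Combining this with $\sum_{j\ge m+1}|c_j|/u_0^j\le \epsilon_m(s,u_0)$ — which follows from elementary bounds on the Taylor coefficients of $f_s(z)=e^{sz}(1+z)^{-s}$ at $z=0$, and matches \eqref{eps def} — and then summing the resulting estimate first over $j\ge m+1$ and then over $r$, recovers exactly $\epsilon_m(s,u_0)\,\mathcal{B}_M(s,\chi,u_0,v_0)$. Finally, $R<2u_0\log(M/v_0)+1$ follows from the recurrence $v_{r+1}\ge v_r(1+1/u_0)$ together with $\log(1+1/u_0)\ge 1/(2u_0)$ for $u_0\ge 1$.

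The main obstacle is obtaining the oscillatory bound on the block character sum: it rests on the decomposition of lemma~\ref{gKr lemma}, which exploits the fact that for $b\ge \lceil a/2\rceil$ the map $x\mapsto \chi(1+p^b x)$ is a linear additive character of $\mathbb{Z}/p^{a-b}\mathbb{Z}$. The Taylor-remainder estimate $\epsilon_m$ and the geometric-sum bounds are identical to the zeta case in Theorem~\ref{zeta alg}, applied within each invertible residue class modulo $p^b$.
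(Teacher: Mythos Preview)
Your overall strategy matches the paper's: subdivide the main sum from \eqref{dirichlet trunc} into blocks, Taylor-expand $f_s$ on each block, split each block into residue classes modulo $p^b$, and invoke lemma~\ref{gKr lemma} to turn the character into a linear exponential. The bound on $R$ is also obtained correctly.

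There is, however, a genuine gap at the key estimate. The displayed inequality
\[
|g_{K_r}^{(j)}(-s/v_r,\chi,v_r)|\le \Bigl(\tfrac{v_r}{u_0}\Bigr)^{j}\sum_{d=0}^{p^b-1}\delta_{\gcd(v_r+d,p)=1}\,e^{-\sigma d/v_r}\Bigl|\sum_{\ell=0}^{H_{r,d}-1}e^{(iw_{r,d}-p^b s/v_r)\ell}\Bigr|
\]
does \emph{not} follow from ``$(d+p^b\ell)^j\le (v_r/u_0)^j$ together with the triangle inequality.'' The triangle inequality would replace the inner sum by $\sum_\ell (d+p^b\ell)^j e^{-\sigma(d+p^b\ell)/v_r}$, destroying the oscillation and giving only the trivial branch of the minimum in $\mathcal{B}_M$. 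One cannot bound a weighted sum $\sum_\ell a_\ell b_\ell$ by $(\max_\ell a_\ell)\,|\sum_\ell b_\ell|$ when the $b_\ell$ cancel: already with $j=1$, two terms, and phase $\pi$ one gets $|\,0\cdot 1+1\cdot(-1)\,|=1$ versus $1\cdot|1-1|=0$.

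The fix --- and this is exactly what the paper does in lemma~\ref{h chi sum}, following lemma~\ref{h sum} --- is to use Abel (partial) summation, exploiting that the weight $(d+p^b\ell)^j e^{-\sigma(d+p^b\ell)/v_r}$ is monotone in $\ell$ on the relevant range. This produces
\[
\Bigl((K_r-1)/v_r\Bigr)^j\,\max_{x\in[0,H_{r,d}]}\Bigl|\sum_{x\le \ell<H_{r,d}} e^{i(w_{r,d}-p^bt/v_r)\ell}\Bigr|,
\]
and the maximum of these geometric tails is then bounded by $|\csc(w_{r,d}/2-p^bt/(2v_r))|$. With this correction your argument goes through and coincides with the paper's proof.
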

We  use the Postnikov character formula
in \textsection{\ref{proofs}}
to show that $g_K(z,\chi,v)$ can be written as a sum of $p^b$ geometric sums.
\begin{lemma} \label{gKr lemma}
Given a Dirichlet character $\chi\bmod{p^a}$ with 
$p$ a prime, let $b=\lceil a/2\rceil$, and
$H_d:= \lceil (K-d)/p^b\rceil$. Then
$g_K(z,\chi,v) = \sum_{d=0}^{p^b-1} \chi(v+d) e^{zd} g_{H_d}(p^bz+iw_d)$,
where $w_d:=2\pi \overline{v+d}L/p^{a-b}$ 
if $(v+d,p)=1$, with $(\overline{v+d})(v+d)\equiv 1\bmod{p^a}$, 
otherwise $w_d:=0$. 
Here, $L\in [0,p^{a-b})$ is the integer
determined by the equation $\chi(1+p^b)=e^{2\pi i L/p^{a-b}}$.
\end{lemma}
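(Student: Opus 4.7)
The plan is to split the summation index according to its residue modulo $p^b$ and then linearize the character on each resulting progression via Postnikov's observation. Writing $k=d+p^b\ell$ with $0\le d<p^b$ and $0\le \ell<H_d$, I would rearrange
\begin{equation*}
g_K(z,\chi,v) \;=\; \sum_{d=0}^{p^b-1} e^{dz}\sum_{\ell=0}^{H_d-1} \chi(v+d+p^b\ell)\, e^{p^b z\ell}.
\end{equation*}
The terms with $\gcd(v+d,p)>1$ contribute nothing, since then $p\mid v+d+p^b\ell$ and hence $\chi(v+d+p^b\ell)=0$; this matches the factor $\delta_{\gcd(v+d,p)=1}$ appearing in $\mathcal{B}_M(s,\chi,u_0,v_0)$.

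For the surviving indices, I would factor $v+d$ out of $v+d+p^b\ell$ modulo $p^a$: since $(v+d,p)=1$, one has $v+d+p^b\ell\equiv(v+d)(1+p^b\ell\,\overline{v+d})\pmod{p^a}$, so multiplicativity of $\chi$ gives
\begin{equation*}
\chi(v+d+p^b\ell)\;=\;\chi(v+d)\,\chi(1+p^b\ell\,\overline{v+d}).
\end{equation*}

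The key input is the Postnikov-type observation that, because $2b\ge a$, one has $(1+p^b x)(1+p^b y)\equiv 1+p^b(x+y)\pmod{p^a}$, so the map $y\bmod p^{a-b}\mapsto\chi(1+p^b y)$ is a homomorphism from the additive group $\mathbb{Z}/p^{a-b}\mathbb{Z}$ into $\mathbb{C}^{\times}$. It is therefore of the form $y\mapsto e^{2\pi i L y/p^{a-b}}$ for a unique integer $L\in[0,p^{a-b})$, characterized by $\chi(1+p^b)=e^{2\pi iL/p^{a-b}}$, which is the $L$ in the lemma statement.

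Applying this with $y=\ell\,\overline{v+d}$ yields $\chi(1+p^b\ell\,\overline{v+d})=e^{i w_d\ell}$ with $w_d=2\pi\overline{v+d}L/p^{a-b}$, so the inner sum in $\ell$ collapses to a geometric series of length $H_d$ and common ratio $e^{p^b z+iw_d}$, namely $g_{H_d}(p^b z+iw_d)$. Reassembling the pieces gives the claimed identity. The only non-routine step is the verification that $y\mapsto\chi(1+p^b y)$ is additive modulo $p^{a-b}$; this is precisely where the choice $b=\lceil a/2\rceil$ (equivalently $2b\ge a$) is essential, and I expect it to be the delicate point of the argument, while the splitting $k=d+p^b\ell$ and the inverse factorization are routine bookkeeping.
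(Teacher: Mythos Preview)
Your argument is correct and follows the same route as the paper: split $k=d+p^b\ell$, kill the $p\mid v+d$ terms via $\chi(v+d)=0$, factor $v+d+p^b\ell\equiv(v+d)(1+p^b\overline{v+d}\,\ell)\pmod{p^a}$, and invoke the Postnikov identity $\chi(1+p^b y)=e^{2\pi iLy/p^{a-b}}$ to collapse the inner sum to $g_{H_d}(p^bz+iw_d)$. The paper isolates the Postnikov step as a separate lemma (its lemma~\ref{postnikov}), whereas you prove it inline, but the content is identical.
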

The main sum in Theorem~\ref{dirichlet alg} 
has $\le v_0 + (m+1)(R+1)p^b$ terms, where the
extra $p^b$  is from the formula for $g_K(z,\chi,v)$
in lemma~\ref{gKr lemma}.
One can easily deduce from the proof of lemma~\ref{Kr bound} that 
$\mathcal{B}_M(s,\chi,u_0,v_0) \le v_0^{-\sigma}+ (M^{1-\sigma}-v_0^{1-\sigma})(1-\sigma)^{-1}$
if $\sigma\ne 1$, and $\mathcal{B}_M(s,\chi,u_0,v_0)\le v_0^{-\sigma} +\log(M/v_0)$
if $\sigma = 1$. This bound is generous, of course, and can be improved
by computing $\mathcal{B}_M(s,\chi,u_0,v_0)$ directly, as was pointed out
earlier for zeta. In any case,
we can ensure that $|\mathcal{T}_{M,m}(s,\chi)| + |\mathcal{R}_M(s,\chi)| <\epsilon$
for $\sigma=1/2$, by taking
 $M \ll (\mathfrak{q}(s,\chi)/\epsilon)^2$ and
$m \ll \log (\mathfrak{q}(s,\chi)/\epsilon)$. 
So, choosing $u_0 = 2\lceil \sqrt{\mathfrak{q}(s)}\rceil$ 
and $v_0 = p^b u_0$, we see that the main
sum on the critical line can be made of length
$\ll p^b\sqrt{\mathfrak{q}(s)} \log^2 (\mathfrak{q}(s,\chi)/\epsilon)$ terms.
If $a$ is an even integer, or a large integer, then $p^b\approx \sqrt{q}$, and
so the length of the main sum
is about $\sqrt{\mathfrak{q}(s,\chi)}\log^2 (\mathfrak{q}(s,\chi)/\epsilon)$.
We remark that one can apply the Euler-Maclaurin formula 
along arithmetic progressions to the main sum in \eqref{dirichlet trunc}
(for each residue class of $p^a$).
This way, one can restrict $M\ll \mathfrak{q}(s,\chi)$, replacing
$\mathcal{R}_M(s,\chi)$ by
the correction terms resulting from the Euler-Maclaurin formula.
These correction terms 
will involve sums over the residue classes of $p^a$. 
But it will not be too hard to see that these sums can be tackled
using the same methods presented here.

\begin{remark}
If $\sigma > 0$, then one has the exact expression
\begin{equation}
L(s,\chi) = \sum_{n=1}^{v_0-1} \frac{\chi(n)}{n^s} 
+ \sum_{r=0}^{\infty}\frac{1}{v_r^s} 
\sum_{j=0}^{\infty} \frac{f_s^{(j)}(0)}{j!}
\frac{g_{K_r}^{(j)}(-s/v_r,\chi,v_r)}{v_r^j}.
\end{equation}
The order of the double sum can be switched if $\sigma>1$.
\end{remark}

\section{Previous methods and motivation} \label{prev methods}
In the case of the Riemann zeta function, 
one can use the Euler-Maclaurin summation
to obtain a main sum of length about $\mathfrak{q}(s)$. 
One notes that
$n^{-s}$ changes slowly with $n$ when $n\gg \mathfrak{q}(s)$,
and so $n^{-s}$ becomes approximable by the integral $\int_n^{n+1}
x^{-s}\,dx$. This gives an efficient way to compute the tail $\sum_{n\gg
\mathfrak{q}(s)} n^{-s}$. 
Specifically, following \cite{odlyzko-schonhage-algorithm, 
 rubinstein-computational-methods}, we have, for any positive integers $N$ and
 $L_1$,
\begin{equation}
\zeta(s) = \sum_{n=1}^{N-1} n^{-s} + \frac{N^{-s}}{2} +\frac{N^{1-s}}{s-1} +
\sum_{\ell=1}^{L_1} T_{\ell,N}(s) + E_{N,L_1}(s),
\end{equation}
where $T_{\ell,N}(s)=\frac{B_{2\ell}}{(2\ell)!}N^{-s}\prod_{l=0}^{2\ell-2}
(s+l)/N$, $B_2=1/6$, $B_4=-1/30,\ldots,$ are the Bernoulli numbers, and, by the
estimate in \cite{rubinstein-computational-methods}, we have, for any $\sigma
>-(2L_1+1)$,
\begin{equation}\label{ec rem}
|E_{N,L_1}(s)| \le \frac{\zeta(2L_1)}{\pi N^{\sigma}}
\frac{|s+2L_1-1|}{\sigma+2L_1-2}
\prod_{l=0}^{2L_1-2} \frac{|s+l|}{2\pi N}.
\end{equation}
It follows from \eqref{ec rem} that, for $\sigma\ge 1/2$ say, 
one can ensure that $|E_{N,L_1}(s)|<\epsilon$ by taking $2\pi N \ge e |s+2L_1-1|$ 
and $2L_1-1 > 0.5 \log |s+2L_1-1| -\log \epsilon$.
Therefore, the remainder term in the Euler-Maclaurin summation is
easy to control, enabling very accurate computations of zeta.

Rubinstein showed~\cite{rubinstein-computational-methods} that 
one could reduce the length of the main sum in the Euler-Maclaurin
formula to $\ll \mathfrak{q}(s)^{1/2}$ terms,
but requiring $\approx \log(\mathfrak{q}(s)/\epsilon)\log(\mathfrak{q}(s))$ precision due to 
substantial cancellation that occurs, and with each term involving an incomplete
Gamma function. 
The Riemann-Siegel formula offers good control over the required precision,
and is often used in zeta computations.
The derivation of the Riemann-Siegel formula is quite involved.
One begins by expressing $\zeta(s)$ as a contour integral, then moves the contour
of integration suitably. This leads to a remainder term that requires
careful saddle-point analysis;
see  \cite[Chap. IV]{titchmarsh-book} and \cite[Chapter 7]{edwards-zeta-book}
for example. One version of the Riemann-Siegel formula 
on the critical line is the following. For $t>2\pi$,
let $a:=\sqrt{t/(2\pi)}$, $n_1:=\lfloor a\rfloor$ the
integer part of $a$, and $z:= 1-2(a-\lfloor a\rfloor)$.
Then
\begin{equation} \label{eq:rsform}
e^{i\theta(t)}\zeta(1/2+it) = 2\, \Re \left(e^{-i\theta(t)}\sum_{n=1}^{n_1} 
\frac{e^{it \log n}}{\sqrt{n}}\right) - \frac{(-1)^{n_1}}{\sqrt{a}}
\sum_{r=0}^m \frac{C_r(z)}{a^r} + R_m(t).
\end{equation}
The $C_r(z)$ can be written as a linear combination of derivatives
of the function $F(z):=\cos ((\pi/2)(z^2+3/4))(\cos(\pi z))^{-1}$ (up to the $3r$-th
derivative). 
For example, $C_0(z) = F(z)$ and $C_1(z) = F^{(3)}(z)/(12\pi^2)$,
where $F^{(3)}(z)$ is the third derivative of $F(z)$ with respect to $z$.
(Note that $F(z)$ is not periodic in $z$.) 
The general form of $C_r(z)$ can be found in Gabcke's
thesis~\cite{gabcke-thesis}.
Using formal manipulations of Dirichlet series, Berry
showed~\cite{berry-riemann-siegel} (see also \cite{berry-keating-zeta-method}) 
that the series of the correction terms $\sum_{r\ge 0} C_r(z) a^{-r}$
is divergent, and, therefore, improvement
from adding more correction terms in \eqref{eq:rsform} 
is not to continue indefinitely, instead, 
the series should be stopped at the least term for a given $t$. 
The phase $\theta(t)$ is defined by $\theta(t):=\arg[\pi^{-it/2}\Gamma(1/4+it/2)]$.
We can also define $\theta(t)$ by 
a continuous variation of $s$ in $\pi^{-s/2}\Gamma(s/2)$, starting at $s=1/2$ 
and going up vertically, which gives the formula  
$\theta(t)=(t/2)\log(t/(2\pi e))-\pi/8 + 1/(48t)+O(t^{-3})$ for large $t$.
We note that the rotation factor $e^{i\theta(t)}$ is chosen so that
$e^{i\theta(t)}\zeta(1/2+it)$
is real. Thus, one may locate non-trivial zeros of zeta
by looking for sign changes in the r.h.s. of \eqref{eq:rsform}.

As for the remainder term $R_m(t)$, we have $R_m(t) \ll t^{-(2m+3)/4}$.
Gabcke derived explicit bounds for $R_m(t)$, for $m=0,\ldots,10$ 
in his thesis~\cite{gabcke-thesis}. For example, 
for $t\geq 200$, we have
$|R_1(t)| < .053 t^{-5/4}$, $|R_4(t)| <  0.017t^{-11/4}$, 
and $|R_{10}(t)| < 25966 t^{-23/4}$.
While Gabcke's estimates are sufficient for most 
applications, they do not allow for very high 
accuracy for relatively small $t$,
such as required when computing zeta zeros to many digits in order 
to test their linear independence. (Recently, very good bounds have been
derived in \cite{reyna}.)
A source of the difficulty towards explicit estimates 
of $R_m(t)$ is that the main sum of the Riemann-Siegel
formula has a sharp cut-off (dictated by the location of the saddle-point), 
which complicates the analysis of the remainder
term significantly. The analysis
is much simplified by using a smoothing function.
Indeed, Turing had 
proposed~\cite{turing-zeta-method} a type of smoothed formula for
computing zeta in the intermediate range where
$t$ is neither so small that the Euler-Maclaurin summation can be 
used nor large enough for the Riemann-Siegel
asymptotic formula.\footnote{It is worth mentioning that Theorem~\ref{zeta alg}
is useful in such a range, in order to carry out
high precision computations.}
Rubinstein provides~\cite{rubinstein-computational-methods}
the following smoothed formula, which 
has a main sum of length $\mathfrak{q}(s)^{1/2+o_{\epsilon}(1)}$,
and which can be generalized to a fairly large class of $L$-functions.
\begin{equation} \label{smoothed approx}
\pi^{-s/2}\Gamma(s/2)\zeta(s)\delta^{-s}
= -\frac{1}{s}-\frac{\delta^{-1}}{1-s}
+\sum_{n=1}^{\infty} G(s/2,\pi n^2\delta^2) 
+\delta^{-1}\sum_{n=1}^{\infty} G((1-s)/2,\pi n^2/\delta^2),
\end{equation}
where $G(z,w)$ is a smoothing function that can be expressed in terms of
the incomplete Gamma function $\Gamma(z,w)$,
\label{Gzw}
$G(z,w) := w^{-z}\Gamma(z,w) = \int_1^{\infty} e^{-wx}x^{z-1}\,dx$, $\Re(w)
> 0$,
and $\delta$ is a complex parameter of modulus one, with a simple dependence on
$t$, such that $|\Im (\log \delta)| \in (-\pi,\pi]$
and $\Im(\log \delta)$ tends to $\sgn(t) \pi/4$ for large $t$. In explicit form,
$\delta = \exp(i\sgn(t) (\pi/4 - \theta))$, where 
$\theta = \pi/4$ if $|t| \le 2c/\pi$, $\theta = c/|2t|$ if $|t|>2c/\pi$,
and $c>0$ is a free parameter that we can optimize.
In particular, $\delta^{-s}$ is chosen to cancel out the exponential decay
in $\Gamma(s/2)$ as $t$ gets large on the l.h.s\ of \eqref{smoothed
approx}, ensuring that the l.h.s.\ is $\gg |s|^{(\sigma-1)/2}|\zeta(s)| e^{-c}$
for large $t$.
Although the series in \eqref{smoothed approx} are infinite, the weights
$G(z,w)$ decay exponentially fast when $\Re(w) \gg 1$. Specifically,
following \cite{rubinstein-computational-methods}, we have
for $\Re(w)>0$ and $\Re(z)\le 1$ that $|G(z,w)| < e^{-\Re(w)}/\Re(w)$.
So, for $|t| > 2c/\pi$ and $\sigma \in [0,1]$ say,
we have $\Re(\pi n^2\delta^2)=\Re(\pi n^2/\delta^2) = \pi n^2 \cos(\pi/2 -
c/|t|) > \pi n^2 c/|2t|$, where we used
the inequality $\cos(\pi/2-x)\ge x/2$ for $0\le x\le 1$. 
Therefore, the series can be truncated 
after $M$ terms with truncation error $< 4|t|/(\pi c) \sum_{n\ge M} n^{-2} e^{-\pi n^2
c/|2t|}$. So to ensure that the truncation error is $<\epsilon$, it
certainly suffices to take $M > \sqrt{|2t|/(\pi c) \log (|4t|/(\epsilon c))}$.
Once the series is truncated, it can be evaluated term by term to give
a numerical approximation of $\zeta(\sigma+it)$ for $|t|>2c/\pi$.
The number of terms in the resulting main sum (i.e.\ truncated series) is roughly equal to
$\sqrt{\mathfrak{q}(s)\log(\mathfrak{q}(s)/\epsilon)}$.
The terms in the main sum are more complicated than in the
Riemann-Siegel formula 
since each term involves the smoothing function $G(z,w)$.

In the case of Dirichlet $L$-functions, 
Davies~\cite{davies-dirichlet-l-functions},
Deuring~\cite{deuring-dirichlet-l-functions}, Lavrik~\cite{lavrik}, and
others had developed Riemann-Siegel type formulas for $L(1/2+it,\chi)$,
where $\chi$ is a primitive character mod $q$ and $t\gg 1$.
Such formulas, whose general form was already considered by
Siegel~\cite{siegel-dirichlet-l-functions},
require the numerical evaluation of a main sum of length $q\lfloor \sqrt{t/(2\pi
q)}\rfloor\approx \mathfrak{q}(\chi,s)^{1/2}$ terms, 
where each term is of the form $\chi(n) n^{-1/2}\exp(it\log n)$.
Unfortunately, however, it does not seem that we have an analogue of Gabcke's explicit estimate
 for the remainder terms in such formulas. 
And it is not
clear how to obtain a posteriori error estimate either.
Therefore, we are not prepared to find the accuracy of
the numerics resulting from these formulas explicitly.
Still, if one is willing to live with a much longer main sum,
consisting of about $\mathfrak{q}(\chi,s)$ terms,
then one can keep the simplicity of an unsmoothed main sum while
having an explicit estimate for the remainder term.
The basic idea is well-known, and was implemented carefully
by Rumely~\cite{rumely-dirichlet-l-functions}.
Essentially, one uses the periodicity of $\chi$ to
write $L(1/2+it,\chi)$ as a linear combination of about $\mathfrak{q}(\chi,0)$
Hurwitz zeta functions, then one approximates each Hurwitz zeta function
using the Euler-Maclaurin
summation formula.
However, since the Euler-Maclaurin formula requires
a main sum of length about $\mathfrak{q}(s)$, the cost of this method is prohibitive in comparison
with a Riemann-Siegel approach with explicit remainder.
In view of this, one typically uses a smoothing function to accelerate the
convergence.
Such formulae (see \cite{rubinstein-computational-methods}) are applicable even for small $t$ and
have a main sum of length $\mathfrak{q}(\chi,s)^{1/2+o_{\epsilon}(1)}$ terms, 
where each term involves the computation of a smoothing function. 

\section{Proofs of Theorems \ref{zeta alg} \& \ref{dirichlet alg}}\label{proofs}
We first prove Theorem~\ref{zeta alg}. 
The proof of Theorem~\ref{dirichlet alg}
will be similar, but will additionally require 
a specialization of the Postnikov character formula, lemma~\ref{postnikov}.
Recall that we choose integers $u_0\ge 1$, $v_0\ge u_0$, $M\ge v_0$,
and we construct the sequences $K_r= \min\{\lceil v_r/u_0\rceil, M-v_r\}$ 
and $v_{r+1}= v_r + K_r$ for $0\le r\le R$, where 
$R:=R(v_0,u_0,M)$ is the smallest integer such that $v_{R+1} = M$. 
\begin{lemma} \label{R bound}
$R = R(v_0,u_0,M) < 2u_0 \log(M/v_0)+1$.
\end{lemma}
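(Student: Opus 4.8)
The plan is to track the geometric growth of the sequence $v_r$ and then invert it. First I would observe that, by construction, for every $0\le r<R$ one has $K_r=\lceil v_r/u_0\rceil\ge v_r/u_0$: the truncated value $M-v_r$ in $K_r=\min\{\lceil v_r/u_0\rceil,M-v_r\}$ can only be the smaller one at $r=R$, since the moment $M-v_r<\lceil v_r/u_0\rceil$ we get $K_r=M-v_r$ and hence $v_{r+1}=M$, which (as $v_r$ is strictly increasing) forces $r$ to be the largest index with $v_r<M$, i.e.\ $r=R$. Consequently $v_{r+1}=v_r+K_r\ge v_r(1+1/u_0)$ for all $0\le r<R$, and iterating from $r=0$ gives $v_R\ge v_0\,(1+1/u_0)^R$.

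Next I would combine this with the defining property $v_R<M$ to obtain $(1+1/u_0)^R<M/v_0$, hence $R\log(1+1/u_0)<\log(M/v_0)$; note that $M>v_0$ makes the right-hand side positive, while the degenerate case $M=v_0$ has no blocks and nothing to prove. Finally I would invoke the elementary inequality $\log(1+x)\ge x/2$ valid for $0\le x\le 1$ — which holds because $\log(1+x)-x/2$ vanishes at $x=0$ and has nonnegative derivative $1/(1+x)-1/2$ on $[0,1]$ — applied with $x=1/u_0\in(0,1]$ (recall $u_0\ge 1$). This yields $\log(1+1/u_0)\ge 1/(2u_0)$, and therefore $R<2u_0\log(M/v_0)$, which is slightly stronger than the stated bound $R<2u_0\log(M/v_0)+1$.

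There is no genuine obstacle here: once the geometric lower bound $v_r\ge v_0(1+1/u_0)^r$ is in place, the estimate is two lines. The one point deserving a moment's care is the bookkeeping claim that $K_r=\lceil v_r/u_0\rceil$ for every $r<R$, which is exactly what the definition of $R$ as the largest index with $v_R<M$ guarantees; everything else is the convexity bound for $\log(1+x)$ and a single logarithm.
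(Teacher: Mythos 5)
Your proof is correct and follows the same approach as the paper: establish the geometric lower bound $v_{r+1}\ge v_r(1+1/u_0)$, iterate, compare with $v_R<M$, and finish with $\log(1+x)\ge x/2$. The only difference is a matter of bookkeeping—you carry the induction one step further to get $v_R\ge v_0(1+1/u_0)^R$ rather than the paper's $v_R\ge v_0(1+1/u_0)^{R-1}$, which gives the slightly sharper conclusion $R<2u_0\log(M/v_0)$ without the $+1$.
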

\begin{proof}
For $r<R$, we have $v_{r+1} = v_r + K_r \ge v_r(1+1/u_0)$, and so
by induction $v_{r+1} \ge v_0(1+1/u_0)^r$. If $R>0$, then 
taking $r=R-1$ and noting that $v_R < M$, we obtain 
$R < \log(M/v_0)/\log(1+1/u_0) +1 \le 2u_0 \log(M/v_0)+1$, 
where we used the inequality $\log(1+x) \ge x/2$ for $0\le x<1$.
If $R=0$, then clearly the last bound still holds.
\end{proof}
\begin{lemma}\label{Kr bound}
Let $s=\sigma+it$, $\sigma\ge 0$. 
Using the same notation for $K_r$, $v_r$, and $R$, we have
\begin{displaymath}
\sum_{0\le r\le R} g_{K_r}(-\sigma/v_r) v_r^{-\sigma} 
\le v_0^{-\sigma}+ \left\{\begin{array}{ll}
\frac{M^{1-\sigma}-v_0^{1-\sigma}}{1-\sigma}, &\sigma\ne 1, \\
\log(M/v_0), &\sigma = 1.\\
 \end{array}\right.
\end{displaymath}
\end{lemma}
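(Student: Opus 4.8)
The plan is to bound each summand $g_{K_r}(-\sigma/v_r)\,v_r^{-\sigma}$ above by the corresponding block of a harmonic-type sum $\sum_n n^{-\sigma}$, then to estimate the resulting full sum by an integral. First I would expand the geometric sum: by definition $g_{K_r}(-\sigma/v_r)=\sum_{k=0}^{K_r-1}e^{-k\sigma/v_r}$, so that $g_{K_r}(-\sigma/v_r)\,v_r^{-\sigma}=\sum_{k=0}^{K_r-1} v_r^{-\sigma}e^{-k\sigma/v_r}$. The key elementary input is $1+x\le e^{x}$, applied with $x=k/v_r\ge 0$, which gives $(1+k/v_r)^{-1}\ge e^{-k/v_r}$; since $\sigma\ge 0$, raising both positive sides to the power $\sigma$ preserves the inequality, so $e^{-k\sigma/v_r}\le(1+k/v_r)^{-\sigma}$. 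Multiplying through by $v_r^{-\sigma}$ yields $v_r^{-\sigma}e^{-k\sigma/v_r}\le(v_r+k)^{-\sigma}$, and hence, using $v_{r+1}=v_r+K_r$,
\[
g_{K_r}(-\sigma/v_r)\,v_r^{-\sigma}\le\sum_{k=0}^{K_r-1}(v_r+k)^{-\sigma}=\sum_{v_r\le n<v_{r+1}} n^{-\sigma}.
\]

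Next I would sum over $0\le r\le R$. By the construction of $K_r$, $v_r$, and $R$ recalled at the start of this section, the blocks $[v_r,v_{r+1})$ are consecutive with $v_{R+1}=M$, so they tile $[v_0,M)$ and we get $\sum_{0\le r\le R} g_{K_r}(-\sigma/v_r)\,v_r^{-\sigma}\le\sum_{v_0\le n<M} n^{-\sigma}$. Finally, since $x\mapsto x^{-\sigma}$ is nonincreasing on $[1,\infty)$ for $\sigma\ge 0$, one has $n^{-\sigma}\le\int_{n-1}^{n}x^{-\sigma}\,dx$ for $n\ge v_0+1$; summing over $v_0+1\le n<M$ and adding back the single term $v_0^{-\sigma}$ gives
\[
\sum_{v_0\le n<M} n^{-\sigma}\le v_0^{-\sigma}+\int_{v_0}^{M}x^{-\sigma}\,dx,
\]
and evaluating the integral as $(M^{1-\sigma}-v_0^{1-\sigma})/(1-\sigma)$ when $\sigma\ne 1$ and as $\log(M/v_0)$ when $\sigma=1$ finishes the argument.

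There is essentially no real obstacle; the only points to watch are that the inequality $e^{-k\sigma/v_r}\le(1+k/v_r)^{-\sigma}$ genuinely uses $\sigma\ge 0$ (it becomes an equality at $\sigma=0$, where the claimed bound degenerates to a harmless overcount of the integers in $[v_0,M)$), and that the intervals $[v_r,v_{r+1})$ do partition $[v_0,M)$ with no overlaps or gaps, which is exactly the design of the sequences $K_r$ and $v_r$. If one wants the sharper heuristic estimate $u_0/(\sigma v_0^{\sigma})$ mentioned in the introduction, that would come from summing $g_{K_r}(-\sigma/v_r)v_r^{-\sigma}$ directly using $K_r\le v_r/u_0+1$ and $g_{K_r}(-\sigma/v_r)\le K_r$, but the crude integral bound above is all that is needed here.
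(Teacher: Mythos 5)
Your proof is correct and follows essentially the same route as the paper: both reduce $g_{K_r}(-\sigma/v_r)v_r^{-\sigma}$ to $\sum_{v_r\le n<v_{r+1}} n^{-\sigma}$ via $e^{-\sigma k/v}\le(1+k/v)^{-\sigma}$, tile $[v_0,M)$ by the blocks, and bound the harmonic-type sum by $v_0^{-\sigma}+\int_{v_0}^M x^{-\sigma}\,dx$. The only cosmetic difference is that you invoke $1+x\le e^x$ directly, whereas the paper derives the same inequality from the Taylor series $\log(1+k/v)\le k/v$.
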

\begin{proof}
For $k<v$, we have $\log(1+k/v) = k/v - k^2/(2v^2) +\cdots \le k/v$.
Thus, $e^{-\sigma k/v} \le (1+k/v)^{-\sigma}$.
Hence, $g_K(-\sigma/v) v^{-\sigma}\le v^{-\sigma}\sum_{0\le k<K} (1+k/v)^{-\sigma} 
= \sum_{0\le k<K} (v+k)^{-\sigma}$.
So $\sum_{0\le r\le R} g_{K_r}(-\sigma/v_r) v_r^{-\sigma}
\le \sum_{v_0\le n<M} n^{-\sigma} \le v_0^{-\sigma} + \int_{v_0}^M x^{-\sigma}\,dx$.
The lemma follows on evaluating the integral.
\end{proof}
\begin{lemma}\label{h sum}
Let $s=\sigma+it$, $\sigma\ge 0$. 
For  any integers $v\ge u\ge 2\max\{6,\sqrt{|s|},\sigma\}$, $K\ge 1$, and $m\ge 0$, 
such that $(K-1)/v \le 1/u$, we have
\begin{equation}\label{geom form}
\sum_{0\le k<K} e^{-s\log(1+k/v)} = 
\sum_{j=0}^m c_j(s) \frac{g_K^{(j)}(-s/v)}{v^j} + \mathcal{E}_m(s,v,K),
\end{equation}
$c_j(s) = \frac{f_s^{(j)}(0)}{j!}$,
$|\mathcal{E}_m(s,v,K)| \le
\epsilon_m(s,u)\min\{g_K(-\sigma/v),|\csc(t/(2v))|e^{-\sigma(K-1)/v}\}$,
and $\epsilon_m(s,u)$ is defined in \eqref{eps def}.
\end{lemma}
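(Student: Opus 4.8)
The plan is to expand $n^{-s}$ inside a block around its left endpoint and quantify the error of replacing the resulting analytic function by its degree-$m$ Taylor polynomial. Writing $n = v+k$ with $0\le k<K$, we have $e^{-s\log(1+k/v)} = f_s(k/v)e^{-sk/v}$, so that the block sum equals $\sum_{0\le k<K} f_s(k/v)e^{-sk/v}$. Expanding $f_s(z) = \sum_{j\ge 0} c_j(s) z^j$ with $c_j(s)=f_s^{(j)}(0)/j!$ and substituting $z=k/v$, then using $\sum_k k^j e^{-sk/v} = v^j g_K^{(j)}(-s/v)$ (since differentiating $g_K(z)=\sum_k e^{kz}$ in $z$ brings down powers of $k$, and $e^{kz}$ evaluated at $z=-s/v$ is $e^{-sk/v}$), gives the main identity $\sum_{0\le k<K} e^{-s\log(1+k/v)} = \sum_{j\ge 0} c_j(s) v^{-j} g_K^{(j)}(-s/v)$ formally; truncating at $j=m$ leaves $\mathcal{E}_m(s,v,K) = \sum_{0\le k<K}\bigl(f_s(k/v)-\sum_{j=0}^{m} c_j(s)(k/v)^j\bigr)e^{-sk/v}$.

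The next step is to bound the Taylor remainder of $f_s$ pointwise on $[0,1/u]$. Here I would use the integral/Cauchy form of the remainder: $f_s(z)-\sum_{j=0}^m c_j(s)z^j = \frac{z^{m+1}}{2\pi i}\oint \frac{f_s(\zeta)}{\zeta^{m+1}(\zeta-z)}\,d\zeta$ over a circle of radius $\rho$ about $0$ with $\rho$ chosen optimally (something like $\rho \asymp 1$ or $\rho\asymp\sqrt{(m+1)/|s|}$ according to the two regimes in \eqref{eps def}). On such a circle $|f_s(\zeta)| = e^{\Re(s\zeta)}|1+\zeta|^{-\sigma} \le e^{|s|\rho}(1-\rho)^{-\sigma}$, which, combined with $|z|\le 1/u$ and $|\zeta - z|\ge \rho - 1/u$, yields a bound of the shape $|f_s(z)-\sum_{j\le m}c_j z^j| \le C\, (u\rho)^{-(m+1)} u^{-(m+1)}\cdot(\text{stuff}) $... more precisely $|f_s(z)-\sum_{j=0}^m c_j(s) z^j|\le \epsilon_m(s,u)$ for $z\in[0,1/u]$ after inserting the optimal $\rho$. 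The two cases $m\le |s|/4$ and $m>|s|/4$ correspond to whether the exponential growth $e^{|s|\rho}$ or the geometric factor $(u\rho)^{-(m+1)}$ dominates the optimization; the constants $0.78$, $0.194$, $3.5$, $2^m$ come out of carefully tracking Stirling's formula $(m+1)! \sim \sqrt{2\pi(m+1)}((m+1)/e)^{m+1}$ against $e^{|s|\rho}$ at the saddle, and this bookkeeping is where the conditions $u\ge 2\max\{6,\sqrt{|s|},\sigma\}$ enter to keep $1-\rho$ and $\rho-1/u$ safely bounded away from $0$.

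Given the pointwise bound $|f_s(k/v)-\sum_{j\le m}c_j(k/v)^j|\le\epsilon_m(s,u)$ valid for all $0\le k<K$ (using $(K-1)/v\le 1/u$ so that $k/v\le 1/u$), I then get $|\mathcal{E}_m(s,v,K)| \le \epsilon_m(s,u)\sum_{0\le k<K}|e^{-sk/v}| = \epsilon_m(s,u)\sum_{0\le k<K} e^{-\sigma k/v} = \epsilon_m(s,u)\,g_K(-\sigma/v)$, which is the first term in the minimum. For the second term I would instead bound the geometric sum by its closed form: $\bigl|\sum_{0\le k<K}e^{-sk/v}\bigr| = \bigl|\frac{e^{-sK/v}-1}{e^{-s/v}-1}\bigr|\le \frac{2e^{-\sigma(K-1)/v}}{|e^{-s/v}-1|}$ (bounding the numerator crudely and pulling out a factor $e^{-\sigma(K-1)/v}$), and then $|e^{-s/v}-1|\ge |e^{-it/v}-1| \cdot(\text{something})$... actually $|e^{-s/v}-1|\ge 2|\sin(t/(2v))|$ is false in general, but $|e^{-s/v}-1|\ge |{\Im}(e^{-s/v})| = e^{-\sigma/v}|\sin(t/v)|$ and a short manipulation gives the factor $|\csc(t/(2v))|$; here the key point is controlling the geometric series away from its pole.

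The main obstacle I anticipate is the explicit optimization in the Taylor-remainder bound: choosing the contour radius $\rho$ to balance $e^{|s|\rho}$ against $(u\rho)^{-(m+1)}$, splitting into the two regimes of \eqref{eps def}, and extracting clean constants like $0.78$ and $0.194$ without losing the $1/\lfloor(m+1)/2\rfloor!$-type decay. Everything else — the algebraic identity relating the Taylor coefficients of $f_s$ to derivatives of $g_K$, and the two crude bounds on the geometric sum $g_K(-s/v)$ — is routine once that estimate is in hand.
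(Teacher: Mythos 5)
Your setup and the trivial bound are sound: writing $e^{-s\log(1+k/v)}=f_s(k/v)e^{-sk/v}$, expanding $f_s$ in a power series, and bounding the pointwise Taylor remainder $R_m(z):=f_s(z)-\sum_{j\le m}c_j(s)z^j$ for $z\in[0,1/u]$ is essentially equivalent to what the paper does (the paper bounds the coefficients $|c_j(s)|$ by Cauchy's estimate on circles $|z|=c$ with $c=\sqrt{j/|s|}$ or $c=1/2$, and then sums the tail $\sum_{j>m}|c_j(s)|(K-1)^j/v^j\le\epsilon_m(s,u)$, which implies exactly your pointwise bound). Combined with $\sum_{k}|e^{-sk/v}|=g_K(-\sigma/v)$, this gives the first term of the minimum.

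The gap is in your argument for the $|\csc(t/(2v))|\,e^{-\sigma(K-1)/v}$ bound. You propose to bound the geometric sum $\bigl|\sum_{0\le k<K}e^{-sk/v}\bigr|$ by its closed form, but $\mathcal{E}_m(s,v,K)=\sum_k R_m(k/v)e^{-sk/v}$ has a $k$-dependent weight $R_m(k/v)$; you cannot pull $\max_k|R_m(k/v)|$ out and multiply it by $\bigl|\sum_k e^{-sk/v}\bigr|$, since that inequality is simply false for non-constant weights. To exploit cancellation in the oscillatory factor you need summation by parts with a weight of controlled variation, and $R_m(k/v)e^{-\sigma k/v}$ is a complex-valued function whose monotonicity is not evident. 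The paper sidesteps this by \emph{first} interchanging the sums over $j$ and $k$: for each fixed $j>m$ one has $\bigl|\sum_{0\le k<K}(k/v)^j e^{-sk/v}\bigr|$, and now the weight $(k/v)^j e^{-\sigma k/v}$ is nonnegative and monotone increasing on $0\le k<K$ (this is where the hypothesis $k/v\le 1/u<1/\sigma\le j/\sigma$ is used), so Abel summation gives $\bigl|\sum_k(k/v)^j e^{-sk/v}\bigr|\le((K-1)/v)^j e^{-\sigma(K-1)/v}\max_x\bigl|\sum_{x\le k<K}e^{-itk/v}\bigr|\le((K-1)/v)^j e^{-\sigma(K-1)/v}|\csc(t/(2v))|$. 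Summing over $j>m$ against $|c_j(s)|$ then recovers the second term of the minimum with the same $\epsilon_m(s,u)$. This coefficient-by-coefficient partial summation is the missing ingredient; with it in place the rest of your plan (Cauchy estimates, choice of radius, the two regimes in \eqref{eps def}) matches the paper.
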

\begin{proof}
We have $e^{-s\log(1+k/v)} = e^{-sk/v} f_s(k/v)$.
The function $f_s(z)$ is analytic in $|z| <1$.
Taking the branch of the logarithm 
determined by $f_s(0)=1$, 
we have $f_s(z) = e^{-s\log(1+z)+sz} = 
e^{sz^2/2 - sz^3/3+\cdots}$ for $|z|<1$. 
We expand $f_s(z)$ into a power series 
$1+\cdots+c_m(s) z^m + \cdots$.
By definition, we have $\mathcal{E}_m(s,v,K) = \sum_{0\le k < K} e^{-sk/v} 
\sum_{j > m} c_j(s) (k/v)^j$.
So, interchanging the order of summation in $j$ and $k$,
we obtain
$|\mathcal{E}_m(s,v,K)| \le \sum_{j>m} |c_j(s)| 
|\sum_{0\le k<K} (k/v)^j e^{-sk/v} |$.
We note that the function $x^j e^{-\sigma x}$ is increasing with $x$ if 
 $0\le x< j/\sigma$. So, if $0\le k < jv/\sigma$, then 
$(k/v)^je^{-\sigma k/v}$ increases with $k$. This last condition, $k<jv/\sigma$,
is satisfied because, by hypothesis, $j>m\ge 0$, so $j\ge 1$, and 
$k/v\le 1/u< 1/\sigma$.
Thus, it follows by partial summation that
\begin{equation}
|\mathcal{E}_m(s,v,K)| \le e^{-\sigma(K-1)/v} \max_{x\in[0,K]} |\sum_{x\le k<K} e^{-itk/v}| 
\sum_{j>m} |c_j(s)| (K-1)^j/v^j.
\end{equation}
Executing the summation in the geometric sum, we see that 
it is bounded by $|\csc(t/(2v))|$.
Also, by a trivial estimate, 
$|\sum_{0\le k<K}e^{-sk/v} |\le g_K(-\sigma/v)$.
Thus,
\begin{equation}
|\mathcal{E}_m(s,v,K)|\le 
\min\{g_K(-\sigma/v),|\csc(t/(2v))|e^{-\sigma(K-1)/v}\} \sum_{j>m} |c_j(s)| \frac{(K-1)^j}{v^j}.
\end{equation}
We bound $c_j(s)$
by a standard application of Cauchy's theorem using 
a circle around the origin.
We have $2\pi |c_j(s)| \le  |\int_{|z|=c}
f_s(z)/z^{j+1}\,dz | \le  2\pi c^{-j}
e^{|s|c^2/2+\cdots}$, $c\in (0,1)$. 
If $0< j\le |s|/4$, let
$c =\sqrt{j/|s|} \le 1/2$. So
$|s|c^2/2+\cdots \le |s|c^2\sum_{r=2}^{\infty}
c^{r-2}/r \le \alpha j$, where $\alpha:=\sum_{r=2}^{\infty}
(1/2)^{r-2}/r= -2+4\log 2<0.78$. We conclude that
$|c_j(s)| \le  |s|^{j/2} j^{-j/2} e^{\alpha j}$ for
$0< j\le |s|/4$.
Also, for any $j\ge 0$, we may choose $c=1/2$. 
So we have $|c_j(s)| \le 2^j e^{\alpha|s|/4}$ for each $j\ge 0$. 

Since $(K-1)/v\le 1/u$, by hypothesis, we have by the estimate
for $c_j(s)$, and assuming that $m\le |s|/4$, that
\begin{equation}
 \sum_{j>m} |c_j(s)| \frac{(K-1)^j}{v^j} \le \sum_{m<j \le |s|/4} 
|s|^{j/2} u^{-j}j^{-j/2} e^{\alpha j} +
\sum_{j > |s|/4} u^{-j} 2^j e^{\alpha |s|/4}.
\end{equation}
If $|s|/4$ is not an integer, then 
$\sum_{j > |s|/4} u^{-j}2^je^{\alpha|s|/4} 
\le 0.2e^{\alpha} (u/2)^{-\lfloor |s|/4\rfloor}e^{\alpha\lfloor|s|/4\rfloor}$,
where we used $\sum_{\ell>0} (2/u)^{\ell}\le 0.2$
and $u\ge 12$. 
Since this is at most $0.2e^{\alpha}<0.44$ times the last
term in first sum on the r.h.s.\ above, 
we obtain the estimate
\begin{equation}
 \sum_{j>m} |c_j(s)| \frac{(K-1)^j}{v^j} 
\le 1.44\sum_{j>m} |s|^{j/2} u^{-j} j^{-j/2} e^{\alpha j}.
\end{equation}
Now, for $\ell \ge 0$, $(m+1+\ell)^{-(m+1+\ell)/2} 
\le (m+1)^{-(m+1)/2} (1+\ell)^{-\ell/2}$. 
Therefore, 
\begin{equation}
\sum_{j>m} |c_j(s)| \frac{(K-1)^j}{v^j} \le 
\frac{3.5\, e^{\alpha(m+1)}}{(m+1)^{(m+1)/2}}\frac{|s|^{(m+1)/2}}{u^{m+1}}
<  \epsilon_m(s,u),
\end{equation}
where we used $u\ge 2\sqrt{|s|}$ and 
$\sum_{\ell=0}^{\infty} \frac{|s|^{\ell/2}u^{-\ell}e^{\alpha \ell}}{(1+\ell)^{\ell/2}}\le 2.42$,
so $(2.42)(1.44)<3.5$.
If $|s|/4$ is an integer, on the other hand,
then the same bound holds (with an even better constant).
It remains to consider the case when $m>|s|/4$. Here, we have 
$\sum_{j>m} |c_j(s)| (K-1)^j/v^j \le 
\sum_{j > m} u^{-j} 2^j e^{\alpha |s|/4}\le
2^m e^{\alpha|s|/4}/u^m$.
Therefore, $\sum_{j>m} |c_j(s)|(K-1)^j/v^j \le \epsilon_m(s,u)$.
Put together, we arrive at the claimed bound on $\mathcal{E}_m(s,v,K)$.
To complete the proof of the lemma, notice that
\begin{equation}\label{h eq}
\sum_{0\le k<K} e^{-s\log(1+k/v)}
= \sum_{0\le k <K} \sum_{j=0}^m c_j(s) (k/v)^j e^{-sk/v} + 
\mathcal{E}_m(s,v,K).
\end{equation}
So the formula \eqref{geom form} follows on interchanging the order of
the double sum.
\end{proof}
\begin{lemma}\label{postnikov}
Let $\chi\bmod{p^a}$ be a Dirichlet character, where $p$ is a prime,
and let $b=\lceil a/2\rceil$.
Then there exists an integer $L\bmod{p^{a-b}}$, depending on
$\chi$, $p$, $a$, and $b$ only (so independent of $x$),  such that
$\chi(1+p^b x)=e^{2\pi i L  x/p^{a-b}}$
for all $x\in \mathbb{Z}$. 
\end{lemma}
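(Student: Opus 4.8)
The plan is to show that $x \mapsto \chi(1+p^{b}x)$ descends to a group homomorphism from $\mathbb{Z}/p^{a-b}\mathbb{Z}$ to $\mathbb{C}^{\times}$ and then invoke the classification of characters of a finite cyclic group. First I would record the algebraic identity that makes everything work. Since $b=\lceil a/2\rceil$ we have $2b\ge a$, hence for all integers $x,y$,
\[
(1+p^{b}x)(1+p^{b}y)=1+p^{b}(x+y)+p^{2b}xy\equiv 1+p^{b}(x+y)\pmod{p^{a}}.
\]
Because $b\ge 1$, every element $1+p^{b}x$ is congruent to $1$ modulo $p$, so it lies in $(\mathbb{Z}/p^{a}\mathbb{Z})^{\times}$ and $\chi$ does not vanish on it. Combining this with the multiplicativity of $\chi$ gives $\chi(1+p^{b}x)\chi(1+p^{b}y)=\chi(1+p^{b}(x+y))$; that is, the function $\psi(x):=\chi(1+p^{b}x)$ satisfies $\psi(x+y)=\psi(x)\psi(y)$ for all $x,y\in\mathbb{Z}$. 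Moreover $1+p^{b}x\bmod p^{a}$ depends only on $x\bmod p^{a-b}$, so $\psi$ has period $p^{a-b}$, i.e.\ it factors through a homomorphism $\mathbb{Z}/p^{a-b}\mathbb{Z}\to\mathbb{C}^{\times}$.

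Next I would apply the standard fact that every homomorphism from the cyclic group $\mathbb{Z}/p^{a-b}\mathbb{Z}$ to $\mathbb{C}^{\times}$ has the form $x\mapsto\zeta^{x}$ for some $(p^{a-b})$-th root of unity $\zeta$. Writing $\zeta=e^{2\pi i L/p^{a-b}}$ with $L$ an integer determined modulo $p^{a-b}$ (equivalently, $L$ is read off from the value $\chi(1+p^{b})$ on a generator), we get $\psi(x)=e^{2\pi i Lx/p^{a-b}}$ for $x\in\{0,1,\dots,p^{a-b}-1\}$. Since both sides are periodic in $x$ with period $p^{a-b}$, the identity $\chi(1+p^{b}x)=e^{2\pi i Lx/p^{a-b}}$ then holds for every $x\in\mathbb{Z}$, and $L$ depends only on $\chi$, $p$, $a$, and $b$, as claimed.

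There is essentially no serious obstacle here; the one point that deserves a word of care is that the argument must cover $p=2$, where $(\mathbb{Z}/2^{a}\mathbb{Z})^{\times}$ is itself not cyclic. This causes no difficulty, because we never use cyclicity of the full unit group: the displayed identity above (whose only input, $2b\ge a$, holds equally for $p=2$) shows directly that $x\mapsto 1+p^{b}x$ is a homomorphism from $\mathbb{Z}/p^{a-b}\mathbb{Z}$ into $(\mathbb{Z}/p^{a}\mathbb{Z})^{\times}$, surjecting onto the subgroup $\{1+p^{b}x\bmod p^{a}\}$, which therefore is cyclic of order dividing $p^{a-b}$ regardless of the prime. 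Hence the proof proceeds uniformly in $p$, and the edge cases (such as $a=1$, where $p^{a-b}=1$ and the statement is vacuous) are subsumed automatically.
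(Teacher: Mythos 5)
Your proof is correct and follows essentially the same route as the paper: both hinge on the identity $(1+p^{b}x)(1+p^{b}y)\equiv 1+p^{b}(x+y)\pmod{p^{a}}$, valid because $2b\ge a$, and then classify the resulting character of a cyclic group of order $p^{a-b}$. The paper frames this by constructing an explicit generator $\psi$ of the character group of the subgroup $H=\{1+p^{b}x\bmod p^{a}\}$ and writing $\chi|_{H}=\psi^{L}$, whereas you classify $x\mapsto\chi(1+p^{b}x)$ directly as a character of $\mathbb{Z}/p^{a-b}\mathbb{Z}$; the difference is purely one of presentation, and your explicit remark on why $p=2$ causes no trouble is a nice touch.
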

\begin{proof}
The proof is similar to that of \cite[Lemma 4.2]{hiary-char-sums}, 
but we still give it here for completeness.
Let $H$ be the subgroup in $\left(\mathbb{Z}/p^a\mathbb{Z}\right)^*$
consisting of the residue classes congruent to $1\bmod{p^b}$, so
$H$ has size $|H|=p^{a-b}$.
We identify the elements of $H$ with the set of integers
$\{1+p^b x\, |\, 0\le x <p^{a-b}\}$.
Consider the function $\psi: H\to \mathbb{C}$, defined by
$\psi(1+p^b x):=e^{2\pi i x/p^{a-b}}$
By our choice of $b=\lceil a/2\rceil$, we have  $p^{b}\equiv 0\bmod{p^{a-b}}$.
Therefore, $\psi((1+p^b x)(1+p^b y))=\psi(1+p^b x)\psi(1+p^b y)$
for all $x,y,\in\mathbb{Z}$, meaning that $\psi$ is multiplicative. 
Also, $\psi$ is not identically zero; e.g.\ $\psi(1) = 1$. 
Therefore, $\psi$  is a character of $H$.
Moreover, the values $\psi(1+p^b)^u = e^{2\pi i u /p^{a-b}}$,
$0\le u < p^{a-b}$, are all distinct.
In particular, $\psi$ has order $p^{a-b}$, which is the same as the order of
$H$. So $\psi$ generates the full character group of $H$.
Since $\left.\chi\right|_H$ is a character of $H$,
then $\left.\chi\right|_{H} \equiv \psi^L$ for some $L \bmod{p^{a-b}}$. 
To find $L$, we calculate $\chi(1+p^b)$, then use
 the relation $\chi(1+p^b)=e^{2\pi i L /p^{a-b}}$.
\end{proof}
\begin{proof}[Proof of Theorem~\ref{zeta alg}]
We divide the main sum in \eqref{zeta trunc} according to the positions 
of $v_r$ as follows:
$\sum_{1\le n<v_0} n^{-s} + \sum_{0\le r\le R} v_r^{-s} \sum_{0\le k<K_r}
e^{-s\log(1+k/v_r)}$.
Note that $K_r = \lceil
v_r/u_0\rceil \le v_r/u_0+1$ for $r<R$, and $K_R \le v_R/u_0+1$. 
So $(K_r-1)/v_r \le 1/u_0$ throughout $0\le r\le R$.
Thus, the conditions for lemma~\ref{h sum} are satisfied and 
we can apply it to each block 
$\sum_{0\le k<K_r} e^{-s\log(1+k/v_r)}$. This yields
$\mathcal{T}_{M,m}(s,u_0,v_0)=\sum_{0\le r\le R} v_r^{-s} \mathcal{E}_m(s,v_r,K_r)$.
And using the estimate for $\mathcal{E}_m(s,v,K)$ in lemma~\ref{h sum}
yields the required bound on $\mathcal{T}_{M,m}(s,u_0,v_0)$.
\end{proof}
\begin{proof}[Proof of Lemma~\ref{gKr lemma}]
This follows from from the definitions and lemma~\ref{postnikov}:
\begin{equation}
\begin{split}
g_K(z,\chi,v) &= \sum_{d=0}^{p^b-1} \sum_{0\le k<H_d}
e^{z(d+p^bk)}\chi(v+d+p^bk)\\
&= \sum_{d=0}^{p^b-1} \delta_{\gcd(v+d,p)=1} \chi(v+d) e^{zd}\sum_{0\le k<H_d}
e^{p^bzk}\chi(1+p^b\overline{v+d}k)\\
&= \sum_{d=0}^{p^b-1} \delta_{\gcd(v+d,p)=1} \chi(v+d) e^{zd}\sum_{0\le k<H_d}
e^{p^bzk + 2\pi iL \overline{v+d}k/p^{a-b}}\\
&= \sum_{d=0}^{p^b-1} \chi(v+d) e^{zd} g_{H_d}(p^bz+iw_d).
\end{split}
\end{equation}
\end{proof}
\begin{lemma}\label{h chi sum}
Given $s=\sigma+it$, $\sigma\ge 0$, and a Dirichlet character $\chi\bmod{p^a}$
with $p$ a prime, let $b=\lceil a/2\rceil$. 
Then for  any integers $v\ge u\ge 2\max\{6,\sqrt{|s|},\sigma\}$, $K\ge 1$, and $m\ge 0$, 
such that $(K-1)/v \le 1/u$, we have
\begin{equation}\label{geom chi form}
\sum_{0\le k<K} \chi(v+k) e^{-s\log(1+k/v)} = 
\sum_{j=0}^m c_j(s) \frac{g_K^{(j)}(-s/v,\chi,v)}{v^j} + \mathcal{E}_m(s,\chi,v,K),
\end{equation}
where $c_j(s) = \frac{f_s^{(j)}(0)}{j!}$, and, with $H_d=\lceil
(K-d)/p^b\rceil$, we have 
\begin{equation}
|\mathcal{E}_m(s,\chi,v,K)| \le 
\epsilon_m(s,u)\sum_{d=0}^{p^b-1}
\delta_{\gcd(v+d,p)=1} 
\min\{g_{H_d}(-p^b\sigma/v),|\csc(w_d/2-p^bt/(2v))|\}.
\end{equation}
The $\epsilon_m(s,u)$ is defined in \eqref{eps def}.
\end{lemma}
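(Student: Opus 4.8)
The plan is to mimic the proof of Lemma~\ref{h sum}, but to carry the character factor $\chi(v+k)$ through the Taylor expansion and then invoke Lemma~\ref{gKr lemma} to split the resulting geometric-type sum into $p^b$ honest geometric sums. First I would write $\chi(v+k)e^{-s\log(1+k/v)} = \chi(v+k)e^{-sk/v}f_s(k/v)$ and expand $f_s(z) = 1 + \cdots + c_j(s)z^j + \cdots$ exactly as before, so that
\begin{equation*}
\sum_{0\le k<K} \chi(v+k)e^{-s\log(1+k/v)}
= \sum_{0\le k<K}\sum_{j=0}^m c_j(s)(k/v)^j \chi(v+k)e^{-sk/v}
+ \mathcal{E}_m(s,\chi,v,K),
\end{equation*}
with $\mathcal{E}_m(s,\chi,v,K) = \sum_{0\le k<K}\chi(v+k)e^{-sk/v}\sum_{j>m}c_j(s)(k/v)^j$. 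Interchanging the order of the finite double sum in the main term and recognizing $\sum_{0\le k<K}(k/v)^j\chi(v+k)e^{-sk/v} = g_K^{(j)}(-s/v,\chi,v)/v^j$ — since differentiating $g_K(z,\chi,v)=\sum_k\chi(v+k)e^{kz}$ $j$ times in $z$ brings down a factor $k^j$ — gives the stated identity \eqref{geom chi form}; the interchange is the exact analogue of the last step of Lemma~\ref{h sum}.

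The substantive part is bounding $\mathcal{E}_m(s,\chi,v,K)$. Interchanging the order of summation in $j$ and $k$ in the error term, I get $|\mathcal{E}_m(s,\chi,v,K)| \le \sum_{j>m}|c_j(s)|\,\bigl|\sum_{0\le k<K}\chi(v+k)(k/v)^j e^{-sk/v}\bigr|$. The coefficient estimates $\sum_{j>m}|c_j(s)|(K-1)^j/v^j \le \epsilon_m(s,u)$ are already established inside the proof of Lemma~\ref{h sum} and can be quoted verbatim, since they depend only on $(K-1)/v\le 1/u$, $u\ge 2\max\{6,\sqrt{|s|},\sigma\}$, and the power-series bounds for $c_j(s)$ — none of which involve $\chi$. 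So it remains to bound the inner sum $\bigl|\sum_{0\le k<K}\chi(v+k)(k/v)^j e^{-sk/v}\bigr|$ by $\sum_{d}\delta_{\gcd(v+d,p)=1}\min\{g_{H_d}(-p^b\sigma/v),|\csc(w_d/2-p^bt/(2v))|\}$ times a factor absorbed by $(K-1)^j/v^j$. As in Lemma~\ref{h sum}, since $k<jv/\sigma$ throughout the range (because $j\ge 1$ and $k/v\le 1/u<1/\sigma$), the weight $(k/v)^j e^{-\sigma k/v}$ is nondecreasing in $k$; by partial summation the inner sum is at most $(K-1)^j/v^j$ times $\max_{x\in[0,K]}\bigl|\sum_{x\le k<K}\chi(v+k)e^{-itk/v}\bigr|$ (and the $e^{-\sigma(K-1)/v}$ factor can be kept or dropped).

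The crux, then, is estimating $\max_{x}\bigl|\sum_{x\le k<K}\chi(v+k)e^{-itk/v}\bigr|$ and the companion trivial bound $\bigl|\sum_{0\le k<K}\chi(v+k)e^{-sk/v}\bigr|$. For the trivial bound, $|\chi(v+k)|\le 1$ with equality only when $\gcd(v+k,p)=1$, so grouping $k$ by residue $d\bmod p^b$ and applying Lemma~\ref{gKr lemma} with $z=-\sigma/v$ gives $\bigl|\sum_{0\le k<K}\chi(v+k)e^{-sk/v}\bigr|\le\sum_d\delta_{\gcd(v+d,p)=1}g_{H_d}(-p^b\sigma/v)$, which is exactly the first term in the claimed $\min$. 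For the oscillatory bound, apply Lemma~\ref{gKr lemma} with $z=-it/v$: then $g_K(-it/v,\chi,v)=\sum_d\chi(v+d)e^{-itd/v}g_{H_d}(-p^bit/v + iw_d)$, and executing each inner geometric sum $g_{H_d}$ over a range of length $H_d$ bounds it by $|\csc((w_d - p^bt/v)/2)|=|\csc(w_d/2 - p^bt/(2v))|$; the partial-sum-over-a-subinterval version needed for the $\max_x$ is handled the same way, since truncating $g_{H_d}$ to an interval still yields a geometric sum bounded by the same cosecant (and zero residues contribute $0$). Taking the minimum of the two bounds term-by-term in $d$ and combining with $\sum_{j>m}|c_j(s)|(K-1)^j/v^j\le\epsilon_m(s,u)$ yields the stated estimate. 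The main obstacle I anticipate is purely bookkeeping: making sure the partial-summation step and the $\max_x$ over a sub-interval are compatible with the residue-class decomposition from Lemma~\ref{gKr lemma}, i.e.\ that truncating the outer sum in $k$ corresponds cleanly to truncating each inner $g_{H_d}$; once that is set up, every remaining estimate is either identical to Lemma~\ref{h sum} or a direct consequence of Lemma~\ref{gKr lemma}.
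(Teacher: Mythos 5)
Your proof is correct and uses the same essential ingredients as the paper: the Taylor expansion of $f_s$, Abel summation, and the residue-class decomposition from Lemma~\ref{gKr lemma}. The one point of divergence is the order of operations: you apply Abel summation to the full sum over $k\in[0,K)$ first, producing $\max_{x\in[0,K]}\bigl|\sum_{x\le k<K}\chi(v+k)e^{-itk/v}\bigr|$, and then decompose that partial sum into residue classes, whereas the paper first writes $k=d+p^bk'$, expands $(k/v)^j$ by the binomial theorem, and only then applies Abel summation to each of the $p^b$ inner sums over $k'$ separately. The paper's ordering sidesteps precisely the bookkeeping concern you flag at the end, since each inner sum gets its own independent $\max_{x\in[0,H_d]}$ rather than all sharing a single starting index $x$; but your route also closes that gap correctly, because for any fixed $x$ the restriction of $\sum_{x\le k<K}$ to the residue class $d$ is still a geometric sum over a contiguous range of $k'$, hence still bounded by $|\csc(w_d/2-p^bt/(2v))|$. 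The two arguments are otherwise identical, and both yield the stated bound with the coefficient estimate $\sum_{j>m}|c_j(s)|(K-1)^j/v^j\le\epsilon_m(s,u)$ imported verbatim from Lemma~\ref{h sum}.
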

\begin{proof}
Proceeding in the same way as in Theorem~\ref{zeta alg}
and lemma~\ref{gKr lemma}, we arrive at
\begin{equation}
\begin{split}
&\mathcal{E}_m(s,\chi,v,K) =
\sum_{0\le k<K} \chi(v+k) e^{-sk/v} \sum_{j>m} c_j(s) (k/v)^j\\
&= \sum_{j>m}c_j(s) \sum_{0\le k<K} \chi(v+k) e^{-sk/v} (k/v)^j\\
&= \sum_{j>m}c_j(s) \sum_{d=0}^{p^b-1} 
\sum_{0\le k<H_d} \chi(v+d+p^bk) e^{-s(d+p^bk)/v} ((d+p^bk)/v)^j\\
&= \sum_{j>m}c_j(s) \sum_{d=0}^{p^b-1} \chi(v+d)
\sum_{\ell=0}^j \binom{j}{\ell} d^{j-\ell} v^{\ell-j} 
e^{-sd/v} \sum_{0\le k<H_d} (p^bk/v)^{\ell} e^{(-p^bs/v+iw_d)k}.
\end{split}
\end{equation}
Therefore, using partial summation, as in the proof of lemma~\ref{h sum}, we obtain 
\begin{equation}
\begin{split}
&|\mathcal{E}_m(s,\chi,v,K)| \\
&\le 
\sum_{j>m} |c_j(s)| \sum_{d=0}^{p^b-1}\delta_{\gcd(v+d,p)=1}  
\sum_{\ell=0}^j \binom{j}{\ell} (d/v)^{j-\ell} e^{-\sigma d/v} 
 |\sum_{0\le k<H_d} (p^bk/v)^{\ell} e^{(-p^bs/v+iw_d)k}|\\
&\le \sum_{j>m} |c_j(s)| \sum_{d=0}^{p^b-1} \delta_{\gcd(v+d,p)=1} 
((d+p^b(H_d-1))/v)^j
\max_{x\in[0,H_d]} |\sum_{x\le k<H_d} e^{i(-p^bt/v+w_d)k}|\\
&\le \sum_{j>m} |c_j(s)|(K-1)^j/v^j
\sum_{d=0}^{p^b-1}\delta_{\gcd(v_r+d,p)=1}
\max_{x\in[0,H_d]} |\sum_{x\le k<H_d} e^{i(-p^bt/v+w_d)k}|\\
&\le \epsilon_m(s,u) \sum_{d=0}^{p^b-1}
\delta_{\gcd(v_r+d,p)=1}
|\csc(w_d/2-p^bt/(2v))|.
\end{split}
\end{equation}
Combined with the trivial estimate, this yields the lemma.
\end{proof}
\begin{proof}[Proof of Theorem~\ref{dirichlet alg}]
We divide the main sum in \eqref{dirichlet trunc} according to the positions
of $v_r$ as before:
$\sum_{1\le n<v_0} \chi(n)n^{-s} + \sum_{0\le r\le R} v_r^{-s} \sum_{0\le k<K_r}
\chi(v+r)e^{-s\log(1+k/v_r)}$.
We apply lemmas \ref{h chi sum} and \ref{gKr lemma}
to the sum over $k$. This yields the result with
$\mathcal{T}_{M,m}(s,\chi,u_0,v_0)=\sum_{0\le r\le R} v_r^{-s}
\mathcal{E}_m(s,\chi,v_r,K_r)$.
By the estimate for $\mathcal{E}_m(s,\chi,v,K)$ in lemma~\ref{h chi sum}
we obtain the desired bound on $\mathcal{T}_{M,m}(s,\chi, u_0,v_0)$.
\end{proof}

\section{Computing $\displaystyle \frac{f_s^{(j)}(0)}{j!}\frac{g_K^{(j)}(z)}{v^j}$
for $0\le j\le m$} \label{terms comp}
One can choose the parameters
in Theorems \ref{zeta alg} \& \ref{dirichlet alg} so that 
one can achieve moderate accuracy with $m\le 8$, say.
So, in general,
computing $(f_s^{(j)}(0)/j!)(g_K^{(j)}(z)/v^j)$ will be quite
easy, and can be done using closed-form
formulas to evaluate the geometric sum. The methods that we present below
are intended for when $j$ is large, but they can be used for any $j\ge 0$.
In our application (Theorems \ref{zeta alg} \& \ref{dirichlet alg}),
we have $(K-1)/v \le 1/u_0\le 1/(2\sqrt{\mathfrak{q}(s)})$, 
and $-1/2\le \Re(z)\le 0$. So we will assume that this holds throughout. 

We recall that 
$f_s(z) = e^{sz^2/2-sz^3/3+\cdots} = \sum_{j\ge 0} \frac{f^{(j)}_s(0)}{j!} z^j$ for
$|z|<1$. For example, 
\begin{equation}
\begin{split}
&f_s(0) = 1,\qquad f^{(1)}_s(0) = 0,\qquad f^{(2)}_s(0) = s,\qquad f^{(3)}_s(0)
= -2s,\\
&f^{(4)}_s(0) = 3s(2+s),\qquad f^{(5)}_s(0) = -4s(6+5s),\\
& f^{(6)}_s(0) = 5s(24+26s+3s^2),\qquad f^{(7)}_s(0) = -6s(120+154s+35s^2),\\
&f^{(8)}_s(0) = 7s(720+1044s+340s^2+15s^3),\quad \ldots.
\end{split}
\end{equation}
To find $f^{(j)}_s(0)$ in general, let 
$q(z):=\sum_{\alpha\ge 2} (-1)^{\alpha}z^{\alpha}/\alpha$,
so $f_s^{(j)}(z) = Q_{s,j}(z) e^{sq(z)}$
for some $Q_{s,j}(z) = \sum_{l\ge0} w_{j,l}(s) z^l$ that 
satisfies the recursion $Q_{s,0}(z) := 1$ and  
$Q_{s,j+1}(z) = \frac{d}{dz} Q_{s,j}(z) +s Q_{s,j}(z)\frac{d}{dz}q(z)$. 
Therefore, $w_{j,0}(s) = f^{(j)}_s(0)$, 
$w_{0,0}(s) = 1$, $w_{0,l}(s)=0$ for $l>0$, and 
$w_{j+1,l}(s) = (l+1)w_{j,l+1}(s) - 
s\sum_{\alpha =1}^l (-1)^{\alpha} w_{j,l-\alpha}(s)$. 
Using this recursion, one can find all of $f^{(j)}_s(0)=w_{j,0}(s)$ for 
$0\le j\le m$ in about $(m+1)^2$ steps. In carrying out
the recursion, one may treat $s$ symbolically, so 
$w_{j,0}(s)$ is viewed as a polynomial in $s$ and the recursion is finding
the coefficients of this polynomial. 
In fact, it follows from the recursion that, more generally, $w_{j,l}(s)$ 
 is a polynomial in $s$ of degree  $\le \min\{(j+l)/2,j\}$.
So we may write $w_{j,l}(s)=\sum_{0\le \eta\le j/2} \beta_{j,l,\eta} s^{\eta}$. 
Also, $\beta_{0,0,0} = 1$, $\beta_{0,0,\eta}=0$ for $\eta>0$,
$\beta_{0,l,\eta}=0$ for $l>0$, 
and we have $\beta_{j+1,l,\eta} = (l+1)\beta_{j,l+1,\eta} - 
\sum_{\alpha=1}^l (-1)^{\alpha} \beta_{j,l-\alpha,\eta-1}$.
Therefore, using induction, we obtain the bound 
$|\beta_{j,l,\eta}|\le (j+l)!2^{l+1}/l!$.
In particular, $|\beta_{j,0,\eta}|/j!\le 2$. 
Thus, the number of bits needed to represent $|\beta_{j,0,\eta}|/j!$, 
and hence to compute $f_s^{(j)}(0)/j!$ as a polynomial in $s$, 
to a given precision, is also well-controlled.

As for computing $g_K^{(j)}(z)$, one can use the formula
$g_K^{(j)}(z) = \sum_{\ell=0}^j \binom{j}{\ell} w^{(j-\ell)}(z)y^{(\ell)}(z)$, 
where $w(z) := e^{Kz}-1$ and $y(z):=(e^z-1)^{-1}$. 
So for $z\not\in 2\pi i \mathbb{Z}$ we have
\begin{equation}\label{fg comp}
\begin{split}
\frac{f_s^{(j)}(0)}{j!} \frac{g_K^{(j)}(z)}{v^j} &= 
\frac{f_s^{(j)}(0)}{j!}\frac{2^j(K-1)^j}{v^j} \frac{g_K^{(j)}(z)}{2^j(K-1)^j} \\ 
&=\frac{f_s^{(j)}(0)}{j!} \frac{2^j(K-1)^j}{v^j}\left( e^{Kz} 
\sum_{\ell=0}^j \frac{1}{2^j}\binom{j}{\ell}
\frac{y^{(\ell)}(z)}{(K-1)^{\ell}} - \frac{y^{(j)}(z)}{(K-1)^j}\right).
\end{split}
\end{equation}
The factor $2^{-j}$ is inserted inside the sum in \eqref{fg comp} in order to
to control the size of the binomial coefficient $\binom{j}{l}\le 2^j$.
By hypothesis, $(K-1)/v\le 1/u_0\le 1/(2\sqrt{\mathfrak{q}(s)})$.  
So, recalling that $f_s^{(j)}(0) = \sum_{0\le \eta \le j/2} \beta_{j,0,\eta}s^{\eta}$,
$|\beta_{j,0,\eta}|\le 2(j!)$, and $\mathfrak{q}(s)\ge 3$, we obtain
$|f_s^{(j)}(0) 2^j(K-1)^j|/(j!v^j)\le 5$.
In particular, the number of bits
needed to represent the outside factor in \eqref{fg comp} is well-controlled, and 
we may focus on computing the sum enclosed in parentheses.

To that end, we consider the computation of $y^{(\ell)}(z)/(K-1)^{\ell}$ in
\eqref{fg comp}.
If $\ell$ is small, this can be done by directly
differentiating $y(z)$, but this is not a practical method if $\ell$ is large. 
Instead, we note that
$zy(z)=z/(e^z-1)$ is the exponential generating function for the Bernoulli numbers,
specifically,
\begin{equation}
y(z)= \frac{1}{z} -\frac{1}{2} + \sum_{l=1}^{\infty} \frac{B_{2l}}{(2l)!}
z^{2l-1},\quad 0<|z|<2\pi.
\end{equation}
Therefore, for $\ell >0$, 
\begin{equation}
y^{(\ell)}(z) = \frac{(-1)^{\ell} \ell!}{z^{\ell+1}} + 
\sum_{l=\lceil (\ell+1)/2\rceil}^{\infty} 
\frac{B_{2l}}{2l} \frac{z^{2l-\ell-1}}{(2l-\ell-1)!},\quad 0<|z|<2\pi.
\end{equation}
Using the periodicity of $e^z$, and our assumption on $z$,
 we can ensure that the argument given 
 to $g_K^{(j)}(z)$ satisfies $|z| < 3\pi/2$.
Thus, the above formulas will 
suffice to compute $y^{(\ell)}(z)/(K-1)^{\ell}$ provided
that $|z|$ is sufficiently bounded away from
$0$, say $|z|> (m+1)/(K-1)$.
For such $z$, and assuming that $K> 2\pi (m+1)$
 (otherwise, we may compute $g_K^{(j)}(z)$ by direct summation in $\ll m+1$
 steps),
we obtain that $y^{(\ell)}(z)/(K-1)^{\ell}$ is bounded by a constant, and so its size
is well-controlled.
Thus, the only remaining case is when $|z|<(m+1)/(K-1)$, with $K>2\pi (m+1)$.
In this case, we use the Euler-Maclaurin summation.
To this end, let $h_{j,z}(x) :=x^je^{z x}$.
Then $g_K^{(j)}(z) = \sum_{0\le k < K} h_{j,z}(k)$.
Note that, using the periodicity of $e^{zk}$ and 
conjugating if necessary, we may assume that $0\le \Im(z) \le \pi$. 
By the Euler-Maclaurin formula (see \cite{rubinstein-computational-methods}),
we have
\begin{equation}\label{em geometric}
\begin{split}
g_K^{(j)}(z)  &= \int_0^{K-1} h_{j,z}(x)\,dx 
+ \sum_{\ell=1}^L \frac{B_{2\ell}}{(2\ell) !}(h_{j,z}^{(2\ell-1)}(K-1)
-h_{j,z}^{(2\ell-1)}(0)) \\
&+ \frac{1}{2}(h_{j,z}(K-1)+ h_{j,z}(0))+\mathcal{E}_{K,j,z,L}, 
\end{split}
\end{equation}
where $h_{j,z}^{(2\ell-1)}(x)$ is
the $(2\ell-1)$-st derivative of $h_{j,z}(x)$ with respect to $x$,
and the remainder term 
$\mathcal{E}_{K,j,z,L} = (-1/(2L)!)\int_0^{K-1}
B_{2L}(\{x\})h_{j,z}^{(2L)}(x)\,dx$, 
where $B_{2L}(x)$ is the $2L$-th Bernoulli polynomial (e.g.\ $B_2(x) = x^2-x+1/6$), 
and $\{x\}$ is the factional part of $x$. 
Now, $h_{j,z}^{(2\ell-1)}(x)= \sum_{l=0}^{2\ell-1} \binom{2\ell-1}{l}
(\frac{d^l}{dx^l} x^j) (\frac{d^{2\ell-l-1}}{dx^{2\ell-l-1}} e^{zx})$. 
Thus, we have
\begin{equation}
h_{j,z}^{(2\ell-1)}(x) = e^{zx} \sum_{l=0}^{\min\{2\ell-1,j\}} \binom{2\ell-1}{l}
\frac{j!}{(j-l)!} x^{j-l} z^{2\ell-l-1}.
\end{equation}
Also, from the Fourier expansion for $B_{2L}(\{x\})$ (see
\cite{rubinstein-computational-methods}), $|B_{2L}(\{x\})|\le
4\frac{(2L)!}{(2\pi)^{2L}}$. Therefore, 
since $\Re(z)\le 0$, we deduce that 
$|\mathcal{E}_{K,j,z,L}|/(K-1)^j \le 4(K-1)(2\pi)^{-2L}$,
which decays exponentially with $L$.

As for the main term $\int_0^{K-1} h_{j,z}(x)\,dx$
in formula~\eqref{em geometric}, its computation
does not present any difficulty since $|z|<m/K$ 
(so $z$ is small).
For example, one can split
the interval of integration into $m+1$ consecutive subintervals
of equal length, then, after a suitable change of variable, 
apply Taylor expansions to the integrand in each
subinterval, which reduces the problem to integrating polynomials. 
Alternatively, one can use a numerical quadrature rule.

\section{A convexity bound}\label{convexity-bounds}
We will use the following well-spacing lemma to prove corollary~\ref{zeta bound}.
\begin{lemma}\label{csc sum lemma}
Let $\{x_n, n=0,1,\ldots\}$ be a set of real numbers. Suppose there exists a
positive
integer $Q$ such that $\min_{n\ne n'}|x_n-x_{n'}| \ge 1/(2Q)$.
Then, for any $y\ge x$ and any $P\ge 1$, we have
\begin{equation}
\sum_{x_n \in [x,y]} 
\min\{P,|\csc(\pi x_n)|\} \le (1+ \lfloor
y-x\rfloor)(2(A+1)P+2Q\log(Q/A)),
\end{equation}
where $A$ is any positive integer that satisfies $A \le Q/P$.
\end{lemma}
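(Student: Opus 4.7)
The plan is to exploit the $1$-periodicity of $|\csc(\pi x)|$ to localize the sum to unit intervals, and then to carry out a two-scale count on each unit interval: a trivial bound very close to integers, and a harmonic-type bound away from them.

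First, I would cover $[x, y]$ by at most $1 + \lfloor y - x \rfloor$ half-open unit intervals $I_k = [k, k+1)$ with $k\in\mathbb{Z}$. It then suffices to prove, uniformly in $k$, the per-interval bound
\[
\sum_{x_n \in I_k} \min\{P, |\csc(\pi x_n)|\} \le 2(A+1)P + 2Q\log(Q/A).
\]
Writing $x_n = k + t_n$ with $t_n \in [0,1)$, the $t_n$'s inherit the separation hypothesis, and periodicity gives $|\csc(\pi x_n)| = 1/\sin(\pi t_n)$. Using the elementary estimate $\sin(\pi u) \ge 2u$ on $[0, 1/2]$, I would bound this by $1/(2 d_n)$, where $d_n := \min\{t_n, 1 - t_n\}$ is the distance from $x_n$ to the nearer of the two integers $k, k+1$.

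Next I would split the $t_n$'s in $I_k$ at the cutoff $d_n = A/(2Q)$. For the close points ($d_n \le A/(2Q)$), the assumption $A \le Q/P$ forces $1/(2 d_n) \ge Q/A \ge P$, so $\min\{P, |\csc(\pi x_n)|\} \le P$; meanwhile the separation hypothesis limits the number of such $t_n$'s in $[0, A/(2Q)]$ (and similarly in $[1 - A/(2Q), 1)$) to at most $A + 1$, producing a total close contribution of at most $2(A+1) P$. For the far points ($d_n > A/(2Q)$) I would bound $\min\{P, |\csc(\pi x_n)|\} \le 1/(2 d_n)$ and group them into the shells $d_n \in (j/(2Q), (j+1)/(2Q)]$ for $A \le j < Q$; intersected with either half of the unit interval, each such shell is a half-open interval of length $1/(2Q)$ and therefore contains at most one $t_n$ by the separation hypothesis. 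Each shell thus contributes at most $2 \cdot Q/j$, and the partial harmonic sum $\sum_{j=A}^{Q-1} 1/j \le \log(Q/A)$ yields a far contribution of at most $2Q \log(Q/A)$.

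Adding the close and far contributions gives the per-interval bound, and multiplying by the number of unit intervals delivers the lemma. The main obstacle I anticipate is the careful shell-count in the far step: specifically, that strict separation $\ge 1/(2Q)$ really forces at most one $t_n$ per shell per side of the unit interval, rather than two, since any slack there would enlarge the constant in front of $Q\log(Q/A)$. The boundary shell touching $d_n = 1/2$, the treatment of $t_n = 0$, and the degenerate case where $[x,y]$ has length $< 1$ will need a little care but should not require any new ideas.
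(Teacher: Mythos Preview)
Your approach is essentially the paper's: both localize to unit intervals by periodicity, invoke $\sin(\pi\alpha)\ge 2|\alpha|$ on $[0,1/2]$, split near/far from the nearest integer at the scale $A/(2Q)$, and finish with a harmonic-sum estimate; the only cosmetic difference is that the paper centers its unit intervals at integers and compares directly to the extremal grid $\{l/(2Q):|l|\le Q\}$ rather than slicing into shells. One small slip to fix: your inequality $\sum_{j=A}^{Q-1}1/j\le\log(Q/A)$ is false (try $A=1$, $Q=2$); the valid bound is $\sum_{A<l\le Q}1/l\le\log(Q/A)$, which you recover by ranking the points on each half by distance to the integer so that the $l$-th one satisfies $d\ge l/(2Q)$ (this is effectively what the paper does) rather than by shelling on $d$.
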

\begin{proof}
Since $|x_n-x_{n'}| \ge 1/(2Q)$ for $n\ne n'$, then
for any integer $k$ we have
$\sum_{x_n\in[k-1/2,k+1/2]} \min\{P,|\csc(\pi
x_n)|\} \le 
P+\sum_{|l|\le Q} \min\{P,|\csc(\pi l/(2Q))|\}=:*$.
Using the inequality $|\sin(\pi \alpha)|\ge 2|\alpha|$, $-1/2\le \alpha\le 1/2$, 
we obtain that $*\le 2(A+1)P+ \sum_{A< |l|\le Q} Q/|l|$.
Combined with the inequality $\sum_{A<l\le Q} 1/l \le \log(Q/A)$,
this gives $*\le 2(A+1)P+2Q\log(Q/A)$.
Since the interval $[x,y]$ 
contains $\le 1+ \lfloor y-x\rfloor$ integers, the lemma follows.
\end{proof}
The bound that we obtain in corollary~\ref{zeta bound} for zeta is, of course, 
superseded by the bound that one can obtain from the Riemann-Siegel
formula. Nevertheless, it illustrates that 
Theorem~\ref{zeta alg}  yields a convexity bound
of similar strength to the Riemann-Siegel formula,
up to a constant factor, even though it is quite elementary. 
\begin{corollary}\label{zeta bound}
$|\zeta(1/2+it)|\ll \mathfrak{q}(1/2+it)^{1/4}$.
\end{corollary}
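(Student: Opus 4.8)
The plan is to apply Theorem~\ref{zeta alg} with the parameter choices indicated after its statement, namely taking $m=0$ and using the Euler--Maclaurin main sum in place of $\mathcal{R}_M(s)$ so that one may restrict $M \ll \mathfrak{q}(s)$. First I would fix $s = 1/2+it$, set $u_0 = v_0 = 2\lceil\sqrt{\mathfrak{q}(s)}\rceil$, and choose $M$ to be a fixed constant multiple of $\mathfrak{q}(s)$ large enough that the combined error $|\mathcal{T}_{M,0}(s)| + |\mathcal{R}_M(s)|$ is $O(1)$; by Lemma~\ref{Kr bound}, $\mathcal{B}_M(s,u_0,v_0) \le v_0^{-1/2} + 2(M^{1/2}-v_0^{1/2}) \ll \sqrt{\mathfrak{q}(s)}$, and since $u_0 \ge 2\sqrt{|s|}$ we have $\epsilon_0(s,u_0) = 3.5\,e^{0.78}|s|^{1/2}/u_0 \ll 1$, so $|\mathcal{T}_{M,0}(s)| \ll \sqrt{\mathfrak{q}(s)}$. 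That is too weak by itself; the point is that the $\csc$-bound in $\mathcal{B}_M$ must be used instead of the trivial $g_{K_r}$-bound, which is exactly where Lemma~\ref{csc sum lemma} enters.

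The heart of the argument is therefore to bound $\sum_{r=0}^R v_r^{-1/2}\,|\csc(t/(2v_r))|$ (and the analogous pieces coming from the main-sum terms $B_r(s,0)/v_r^s = v_r^{-s} g_{K_r}(-s/v_r)$, each of which is a single geometric sum bounded by $v_r^{-1/2}|\csc(t/(2v_r))|$ as well). Writing $x_r := t/(2\pi v_r)$, I would show the $x_r$ are well-spaced: consecutive $v_r$ satisfy $v_{r+1} - v_r = K_r \asymp v_r/u_0$, so $x_r - x_{r+1} = \frac{t}{2\pi}\cdot\frac{K_r}{v_r v_{r+1}} \asymp \frac{t}{2\pi}\cdot\frac{1}{u_0 v_r}$. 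Since $v_r \le M \ll \mathfrak{q}(s) \asymp t$ and $u_0 \asymp \sqrt{t}$, this gap is $\gg 1/\sqrt{t}$, so the spacing hypothesis of Lemma~\ref{csc sum lemma} holds with $Q \asymp \sqrt{t} \asymp \sqrt{\mathfrak{q}(s)}$. I would also group the indices $r$ into dyadic ranges of $v_r$ (there are $O(\log M)$ such ranges, consistent with $R \ll u_0\log(M/v_0)$), so that on each range $v_r^{-1/2} \asymp V^{-1/2}$ is essentially constant and the number of integers the $x_r$ can occupy in a unit interval is controlled. Applying Lemma~\ref{csc sum lemma} on each dyadic block with $P = V^{1/2}$ (the trivial bound $|\csc| \le P$ coming from $v_r \ge v_0 \asymp \sqrt{t}$ giving $t/(2v_r) \gg 1$, or more carefully from the $g_{K_r}$-bound) and $A$ chosen optimally gives a bound of the shape $V^{-1/2}\cdot(\text{length})\cdot(Q\log Q + AP + \cdots)$; summing over the $O(\log \mathfrak{q}(s))$ dyadic blocks yields $\mathcal{B}_M$-type contributions of size $\ll \mathfrak{q}(s)^{1/4+o(1)}$, and in fact $\ll \mathfrak{q}(s)^{1/4}$ after one is careful that the $\log$ factors from Lemma~\ref{R bound} and Lemma~\ref{csc sum lemma} are absorbed — or, if a clean $\mathfrak{q}^{1/4}$ is wanted, one instead cites the remark after Theorem~\ref{zeta alg} that computing $\mathcal{B}_M$ directly gives $\ll u_0/(\sigma v_0^{1/2}) \asymp \mathfrak{q}(s)^{1/4}$, and this direct estimate is itself what Lemma~\ref{csc sum lemma} is designed to furnish.

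Finally I would assemble the pieces: the initial sum $\sum_{n<v_0} n^{-1/2} \ll v_0^{1/2} \ll \mathfrak{q}(s)^{1/4}$; the block sums $\sum_{r=0}^R |B_r(s,0)|/v_r^{1/2} \ll \mathfrak{q}(s)^{1/4}$ by the well-spacing estimate above; the Euler--Maclaurin correction terms $M^{-1/2}/2 + M^{1/2}/|s-1| + \sum_\ell |T_{\ell,N}|$, which are $\ll 1$ for $M \asymp \mathfrak{q}(s)$ and $L_1 = O(\log\mathfrak{q}(s))$ by the bound~\eqref{ec rem}; and the truncation error $|\mathcal{T}_{M,0}(s)| \le \epsilon_0(s,u_0)\mathcal{B}_M(s,u_0,v_0) \ll \mathfrak{q}(s)^{1/4}$ since $\epsilon_0(s,u_0) \ll 1$ and $\mathcal{B}_M \ll \mathfrak{q}(s)^{1/4}$. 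Adding these, $|\zeta(1/2+it)| \ll \mathfrak{q}(1/2+it)^{1/4}$. The main obstacle is the well-spacing step: verifying rigorously that the points $t/(2\pi v_r)$ are separated by $\gg 1/\sqrt{\mathfrak{q}(s)}$ uniformly in $r$ (which forces one to track the lower bound $K_r \ge v_r/u_0$ against $v_{r+1} \le M \ll \mathfrak{q}(s)$), and then choosing the dyadic decomposition and the auxiliary parameter $A$ in Lemma~\ref{csc sum lemma} so that no stray logarithmic factor survives in the final exponent; everything else is bookkeeping with the explicit bounds already in hand.
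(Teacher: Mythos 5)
Your proposal is correct and follows essentially the same route as the paper's proof: apply Theorem~\ref{zeta alg} with $m=0$, Euler--Maclaurin correction terms in place of $\mathcal{R}_M$, $u_0=v_0\asymp\sqrt{\mathfrak{q}(s)}$, $M\asymp\mathfrak{q}(s)$, and then estimate $\mathcal{B}_M$ via Lemma~\ref{csc sum lemma} applied on dyadic blocks $I_\ell=(2^{\ell}u_0,2^{\ell+1}u_0]$, using the well-spacing of the points $t/(2\pi v_r)$. The one place you are tentative --- whether stray logarithmic factors survive --- is exactly where the paper chooses the parameters block-by-block rather than uniformly: on $I_\ell$ the spacing of $t/(2\pi v_r)$ is $\asymp 1/2^{\ell}$ (not merely $\gg 1/\sqrt t$) and the trivial bound is $K_r\asymp 2^{\ell}$ (not $v_r^{1/2}$), so one takes $Q\asymp P\asymp 2^{\ell}$ and $A=O(1)$, giving $\sum_{v_r\in I_\ell}\min\{\cdot,\cdot\}\ll t(\ell+1)/u_0$, and the weight $v_r^{-1/2}\asymp 2^{-\ell/2}u_0^{-1/2}$ then absorbs the $(\ell+1)$ under $\sum_\ell$, yielding $\mathcal{B}_M\ll t/u_0^{3/2}\asymp t^{1/4}$ with no logarithmic loss.
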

\begin{proof}
We will use Theorem~\ref{zeta alg}, but replacing $\mathcal{R}_M(s)$ 
by the correction terms from the 
Euler-Maclaurin formula for $\zeta(s)$
(see the paragraph following the statement
of the theorem). 
We take $s=1/2+it$, $m=0$, $v_0 = u_0 = 4\lceil \sqrt{t}\rceil$,
$M=10 \lceil t\rceil$, and assume that $t\ge 36$, as we may. 
Given our choice of $M$, it is not hard to show that the Euler-Maclaurin 
correction terms contribute $\ll 1$.  
And given our choice of $u_0$, we have $\epsilon_0(s,u_0)\ll 1$.
By routine calculations, $\sum_{n=1}^{v_0-1} n^{-1/2} \le 2\sqrt{v_0}$
and $|\sum_{r=0}^R g_{K_r}(-s/v_r)v_r^{-s}| \le
\mathcal{B}_M(s,u_0,v_0)$.
Thus, 
\begin{equation}
|\zeta(1/2+it)|\ll \sqrt{v_0}+\mathcal{B}_M(1/2+it,u_0,v_0).
\end{equation}
It is helpful to recall that   
$K_r=\lceil v_r/u_0\rceil$ for $r<R$, $v_{r+1}=v_r+K_r$, and
$\mathcal{B}_M(s,u_0,v_0)\le \sum_{r=0}^R
\min\{K_r,|\csc(t/(2v_r))|\}v_r^{-\sigma}$.
So, letting $I_{\ell} := (2^{\ell}u_0, 2^{\ell+1}u_0]$, we 
see that if $v_r\in I_{\ell}$, then $2^{\ell}<K_r \le 2^{\ell+1}$.
We let $I_{\ell_0}$ denote the interval containing $M$, so $M\le
2^{\ell_0+1}u_0$.
Then, using simple estimates, we obtain 
\begin{equation}
\mathcal{B}_M(s,u_0,v_0) 
\le u_0^{-1/2}\sum_{\ell= 0}^{\ell_0} 2^{-\ell/2} 
\sum_{v_r\in I_{\ell}} \min\{2^{\ell+1},|\csc(t/(2v_r)|\}.
\end{equation}
Now, consider that for $v_r,v_{r+1}\in I_{\ell}$, 
with $r<R$, we have 
\begin{equation}
\begin{split}
\frac{t}{2 v_r}-\frac{t}{2 v_{r+1}} = 
\frac{tK_r}{2 v_rv_{r+1}} \ge 
\frac{t2^{\ell}}{2^{2\ell+3} u_0^2} \ge \frac{t}{2^{\ell+3}u_0^2}
\ge \frac{\pi}{B2^{\ell+2}},
\end{split}
\end{equation}
where $B$ is the smallest positive 
integer such that $t/(2\pi u_0^2)\ge  1/B$. Note that, since $u_0=4\lceil \sqrt{t}\rceil$, then $B\ll 1$. 
Also, as $v_r$ ranges over $I_{\ell}$, 
the argument $t/(2\pi v_r)$ moves by increments
$\ge 1/(B2^{\ell+2})$, and it spans 
an interval of length $\le t/(2\pi 2^{\ell} u_0) - t/(2\pi 2^{\ell+1}u_0)
= t/(\pi 2^{\ell+2}u_0)$. Therefore, applying lemma~\ref{csc sum lemma}
to the set $\{v_r\in I_{\ell}\}$ with $Q=B2^{\ell+1}$, $A=B$, and $P=2^{\ell+1}$, we obtain
\begin{equation}
\begin{split}
\sum_{v_r\in I_{\ell}} \min\{2^{\ell+1},|\csc(t/(2(v_r))|\}
\le& (1+ t/(\pi 2^{\ell+2}u_0))(2(B+1)2^{\ell+1}\\
&+ B2^{\ell+2}\log(2^{\ell+1}))\ll t(\ell+1)/u_0.
\end{split}
\end{equation}
It follows that $\mathcal{B}_M(s,u_0,v_0)\le 
(t/(u_0\sqrt{v_0})\sum_{\ell=0}^{\ell_0} (\ell+1)/2^{-\ell/2} \ll
t/(u_0\sqrt{v_0})$.
So we conclude, $\zeta(1/2+it) \ll \sqrt{v_0} + t/(u_0\sqrt{v_0}) \ll t^{1/4}$.
\end{proof}

\section{Parameter choices}
Theorem~\ref{zeta alg} offers a simple method for 
computing $\zeta(\sigma+it)$  with an explicit error bound. 
The control over the error term in the theorem goes beyond what
the Riemann-Siegel asymptotic formula enables.
Theorem~\ref{dirichlet alg}
achieves the same for $L(\sigma+it,\chi)$ when $\chi$ is power-full. 

We implemented a basic version of
Theorem~\ref{zeta alg}  in \verb!Mathematica 9!, which
is an application for computation, see \url{http://www.wolfram.com/mathematica/}. 
This was sufficient for our purposes as we were mainly interested
in learning about reasonable choices of the
parameters. This way, we could appraise the accuracy and running time in practice.
The \verb!Mathematica! notebook containing the implementation 
is available at \url{https://people.math.osu.edu/hiary.1/}.

Our computation relies on finite precision arithmetic, which
introduces round-off errors. Such errors become 
significant for large $t$. This is primarily because 
the computation of $t\log n\bmod{2\pi}$  will contain only a few correct digits
for large $t$.
In general, one cannot expect more than $\pm \epsilon_{mach}\, t\log n$ accuracy
when computing $e^{it\log n}$, where
$\epsilon_{mach}$ is the machine epsilon. 
So if $t>1/\epsilon_{mach}$ say, then, certainly, 
numerical results will not be meaningful. 
To overcome this problem, one could 
switch to an arithmetic system with a smaller machine epsilon 
(but having a slower performance).
Assuming that round-off errors behave like independent random
variables, which is a reasonable model, 
the accumulated round-off error in computing 
$\sum_{n<M} n^{-1/2-it}$ will be typically like
$\pm \epsilon_{mach}\,t(\sum_{n<M} (\log n)^2/n)^{1/2}$.
For double-precision arithmetic, 
$\epsilon_{mach} = 2^{-52} \approx 2 \times 10^{-16}$.
So, if we use double-precision arithmetic with $t=10^d$ and $M\approx 10t$, 
the accumulated round-off error will be like $\pm 10^{d-16} \log
10^{3(d+1)/2}$. 

With this in mind, we obtained marginally better control over the round-off errors 
by using the main sum from the Euler-Maclaurin formula
with $6$ correction terms, and with $M=10\lceil \mathfrak{q}(s)\rceil$,
in particular we did not need to take $M$ very large.
We computed $g_K(z)$ using the formula $(e^{Kz}-1)/(e^z-1)$ when
$|z|>10(m+1)/(K-1)$ (as is typically the case), and 
using the \verb!Mathematica! built-in 
Euler-Maclaurin summation routine when $|z|<10(m+1)/(K-1)$. 
To check the accuracy of the results,
we compared them with the outputs from \verb!lcalc!
and the \verb!Mathematica! built-in zeta routine,
leading to Table~\ref{compos}.
We attempted to increase the accuracy 
by inputting $t$ in \verb!Mathematica!  using a higher precision.
However, it is likely that 
\verb!Mathematica! still uses double-precision arithmetic in 
intermediate steps and some built-in routines. 
So the accuracy of many stages of the computation will be limited 
by the machine epsilon for double-precision numbers. 

The error entries in Table~\ref{compos} are
significantly smaller than the explicit
bound for $\mathcal{T}_{M,m}(s,u_0,v_0)$ given in Theorem~\ref{zeta alg}.
For example, when $t=10^{10}$ and
$m=6$, the explicit bound gives $|\mathcal{T}_{M,m}(s,u_0,v_0)|\le 2.9\times
10^{-3}$ (here, we calculated $\mathcal{B}_M(s,u0,v0)$ directly).
This is significantly larger than the observed error $1.9\times 10^{-10}$ in
Table~\ref{compos}. This is not surprising, and is due to the pseudo-random nature
of round-off errors. 

\begin{table}[ht]
\renewcommand\arraystretch{1.5}
\footnotesize
\caption{Error for various $t$ and $m$, and using $\sigma=1/2$, 
$u_0 = 6\lceil \sqrt{\mathfrak{q}(s)}\rceil$, and $v_0=10(m+1)u_0$.}
\label{compos}
\begin{tabular}{l|llll}
$t$ & $m=0$ & $m=2$ & $m=4$ & $m=6$  \\
\hline
$10^4$    & $3.0\times 10^{-4}$ & $1.7\times 10^{-6}$ & $5.8\times 10^{-9}$ & $3.7\times 10^{-11}$ \\ 
$10^6$    & $1.2\times 10^{-2}$ & $1.6\times 10^{-5}$ & $7.0\times 10^{-9}$ & $6.9\times 10^{-12}$ \\
$10^8$    & $1.9\times 10^{-2}$ & $2.7\times 10^{-5}$ & $2.8\times 10^{-7}$ & $9.4\times 10^{-10}$ \\
$10^{10}$ & $5.4\times 10^{-3}$ & $1.6\times 10^{-5}$ & $4.2\times 10^{-8}$ & $1.9\times 10^{-10}$ \\
\end{tabular}
\end{table}

There was no attempt to optimize our implementation 
since, in any case, it is 
not competitive with an implementation directly in \verb!C/C++!. 
With our parameter choices, and for large $t$, the implementation
was slower by factor of about $2(m+1)^2\log t$ compared to
 computing the main sum in a Riemann-Siegel formula directly
(in both cases we input $t$ in higher precision than double-precision).
The implementation was  
faster by a factor of about $10 \sqrt{t}/((m+1)^2\log t)$ than computing
$\sum_{n\le M} n^{-s}$ directly (this is essentially the 
main sum in the Euler-Maclaurin formula).
It might be possible to speed up the implementation by a factor of $m+1$ if
the derivatives $g_K^{(j)}(z)$, $0\le j\le m$, are computed simultaneously via 
a recursion. One can also save a factor of $2$ by choosing $u_0=3\lceil
\sqrt{\mathfrak{q}(s)}\rceil$ instead of $u_0=6\lceil
\sqrt{\mathfrak{q}(s)}\rceil$, at the expense of a larger truncation
error $\mathcal{T}_{M,m}(s,u_0,v_0)$.

\bibliographystyle{amsplain}
\bibliography{simpleAlg}

\providecommand{\bysame}{\leavevmode\hbox to3em{\hrulefill}\thinspace}
\providecommand{\MR}{\relax\ifhmode\unskip\space\fi MR }
\providecommand{\MRhref}[2]{%
  \href{http://www.ams.org/mathscinet-getitem?mr=#1}{#2}
}
\providecommand{\href}[2]{#2}
\begin{thebibliography}{10}

\bibitem{reyna}
J.~Arias~de Reyna, \emph{High precision computation of {R}iemann's zeta
  function by the {R}iemann-{S}iegel formula, {I}}, Math. Comp. \textbf{80}
  (2011), no.~274, 995--1009. \MR{2772105 (2012c:11171)}

\bibitem{berry-riemann-siegel}
M.~V. Berry, \emph{The {R}iemann-{S}iegel expansion for the zeta function: high
  orders and remainders}, Proc. Roy. Soc. London Ser. A \textbf{450} (1995),
  no.~1939, 439--462. \MR{1349513 (96f:11105)}

\bibitem{berry-keating-zeta-method}
M.~V. Berry and J.~P. Keating, \emph{A new asymptotic representation for
  {$\zeta(\frac12+it)$} and quantum spectral determinants}, Proc. Roy. Soc.
  London Ser. A \textbf{437} (1992), no.~1899, 151--173. \MR{1177749
  (93j:11057)}

\bibitem{davies-dirichlet-l-functions}
D.~Davies, \emph{An approximate functional equation for {D}irichlet
  {$L$}-functions}, Proc. Roy. Soc. Ser. A \textbf{284} (1965), 224--236.
  \MR{0173352 (30 \#3565)}

\bibitem{deuring-dirichlet-l-functions}
Max Deuring, \emph{Asymptotische {E}ntwicklungen der {D}irichletschen
  {$L$}-{R}eihen}, Math. Ann. \textbf{168} (1967), 1--30. \MR{0213309 (35
  \#4173)}

\bibitem{edwards-zeta-book}
H.~M. Edwards, \emph{Riemann's zeta function}, Dover Publications Inc.,
  Mineola, NY, 2001, Reprint of the 1974 original [Academic Press, New York;
  MR0466039 (57 \#5922)]. \MR{1854455 (2002g:11129)}

\bibitem{gabcke-thesis}
W.~Gabcke, \emph{Neue herleitung und explicite restabsch\"atzung der
  riemann-siegel-formel.}, Ph.D. thesis, G\"ottingen, 1979.

\bibitem{hiary-char-sums}
Ghaith~A. Hiary, \emph{Computing dirichlet character sums to a power-full
  modulus}, arXiv:1205.4687 [math.NT] (2012), 23, To appear in the Journal of
  Number Theory.

\bibitem{hiary-fast-methods}
Ghaith~Ayesh Hiary, \emph{Fast methods to compute the {R}iemann zeta function},
  Ann. of Math. (2) \textbf{174} (2011), no.~2, 891--946. \MR{2831110
  (2012g:11154)}

\bibitem{iwaniec-sarnak}
H.~Iwaniec and P.~Sarnak, \emph{Perspectives on the analytic theory of
  {$L$}-functions}, Geom. Funct. Anal. (2000), no.~Special Volume, Part II,
  705--741, GAFA 2000 (Tel Aviv, 1999). \MR{1826269 (2002b:11117)}

\bibitem{iwaniec-kowalski}
Henryk Iwaniec and Emmanuel Kowalski, \emph{Analytic number theory}, American
  Mathematical Society Colloquium Publications, vol.~53, American Mathematical
  Society, Providence, RI, 2004. \MR{2061214 (2005h:11005)}

\bibitem{lavrik}
A.~F. Lavrik, \emph{The approximate functional equation for {D}irichlet
  {$L$}-functions}, Trudy Moskov. Mat. Ob\v s\v c. \textbf{18} (1968), 91--104.
  \MR{0236126 (38 \#4424)}

\bibitem{odlyzko-schonhage-algorithm}
A.~M. Odlyzko and A.~Sch{\"o}nhage, \emph{Fast algorithms for multiple
  evaluations of the {R}iemann zeta function}, Trans. Amer. Math. Soc.
  \textbf{309} (1988), no.~2, 797--809. \MR{961614 (89j:11083)}

\bibitem{rubinstein-computational-methods}
Michael Rubinstein, \emph{Computational methods and experiments in analytic
  number theory}, Recent perspectives in random matrix theory and number
  theory, London Math. Soc. Lecture Note Ser., vol. 322, Cambridge Univ. Press,
  Cambridge, 2005, pp.~425--506. \MR{2166470 (2006d:11153)}

\bibitem{rumely-dirichlet-l-functions}
Robert Rumely, \emph{Numerical computations concerning the {ERH}}, Math. Comp.
  \textbf{61} (1993), no.~203, 415--440, S17--S23. \MR{1195435 (94b:11085)}

\bibitem{siegel-dirichlet-l-functions}
Carl~Ludwig Siegel, \emph{Contributions to the theory of the {D}irichlet
  {$L$}-series and the {E}pstein zeta-functions}, Ann. of Math. (2) \textbf{44}
  (1943), 143--172. \MR{0007760 (4,189c)}

\bibitem{titchmarsh-book}
E.~C. Titchmarsh, \emph{The theory of the {R}iemann zeta-function}, second ed.,
  The Clarendon Press Oxford University Press, New York, 1986, Edited and with
  a preface by D. R. Heath-Brown. \MR{882550 (88c:11049)}

\bibitem{turing-zeta-method}
A.~M. Turing, \emph{A method for the calculation of the zeta-function}, Proc.
  London Math. Soc. (2) \textbf{48} (1943), 180--197. \MR{0009612 (5,173a)}

\end{thebibliography}
\end{document}